\newtheorem{thm}{\scshape{Theorem}}[section]
\newtheorem{cor}[thm]{\scshape{Corollary}}
\newtheorem{prop}[thm]{\scshape{Proposition}}
\newtheorem{lemma}[thm]{\scshape{Lemma}}
\newtheorem{conj}[thm]{\scshape{Conjecture}}
\newtheorem*{theorem*}{Theorem}
\newtheorem*{prop*}{Proposition}
\newtheorem*{cor*}{Corollary}
\theoremstyle{definition}
\newtheorem{dfn}[thm]{\scshape{Definition}}
\newtheorem{remark}[thm]{\scshape{Remark}}
\def\G{\Gamma}
\def\subset{\subseteq}
\def\={\cong}
\def\mb{\mathbf}
\def\mc{\mathcal}
\def\mbb{\mathbb}
\def\Is{{\it Is}}
\def\ker{{\it ker}}
\def\G_0{G/\Is(\gamma_{3}(G))}
\title{Metabelian groups: full-rank presentations, randomness and Diophantine problems}
\author{Albert Garreta, Leire Legarreta, Alexei Miasnikov, Denis Ovchinnikov}
\date{}
\begin{document}

\maketitle

\begin{abstract}
We study  metabelian groups $G$ given by full rank finite presentations $\langle A \mid R\rangle_{\mc{M}}$  in the  variety $\mathcal{M}$ of metabelian groups. We prove that $G$ is a product of a free metabelian subgroup of rank $\max\{0, |A|-|R|\}$ and a  virtually abelian normal subgroup, and that if $|R|\leq |A|-2$ then the Diophantine problem of $G$ is undecidable, while it is decidable if $|R|\geq |A|$. We further prove that if $|R|\leq |A|-1$ then in any direct decomposition of $G$  all,  but one, factors are virtually abelian. Since  finite presentations have full rank asymptotically almost surely, finitely presented metabelian groups satisfy all the aforementioned properties asymptotically almost surely. 
\end{abstract}

\tableofcontents

\section{Introduction}

In this paper we study finitely generated metabelian groups $G$ given by full rank finite presentations $G = \langle a_1, \ldots, a_n \mid r_1, \ldots, r_m\rangle_{\mc{M}}$ in the  variety $\mathcal{M}$ of metabelian  groups, random metabelian groups in the few relators model, and the Diophantine problem in such groups. We prove that the  Diophantine problem  in   the group $G$ above is  undecidable  if $m \leq n-2$ and decidable if $m \geq n$  (the case $m = n-1$ is still open); if $m \leq n-1$ then in any direct decomposition of $G$ all, but one,   factors are virtually abelian;  and, finally,  $G$ has a rather nice structure, namely, $G$ is a product of two subgroups $G= HL$, where $H$ is a free metabelian group of rank $\max\{n-m,0\}$ and $L$ is  a virtually abelian normal subgroup of $G$. The class of metabelian groups admitting full rank presentations is rather large. Indeed, it turns out that for fixed $n$ and $m$ a finite presentation $\langle a_1, \ldots, a_n \mid r_1, \ldots, r_m\rangle_{\mc{M}}$ has full rank asymptotically almost surely. In particular, random metabelian  groups (in the few relators model) have full rank presentations asymptotically almost surely. Hence, they asymptotically almost surely satisfy all the properties mentioned above. 

\subsection{Metabelian groups with full rank presentations}

Let $A = \{a_1, \ldots,a_n\}$ be a finite alphabet,  $A^{-1} = \{a_1^{-1}, \ldots,a_n^{-1}\}$, $A^{\pm 1} = A \cup A^{-1}$, $(A^{\pm 1})^\ast$  the set of all (finite) words in $A^{\pm 1}$, and  $R = \{r_1, \ldots,r_m \}$  a finite subset of $(A^\pm)^\ast$. We fix this notation for the rest of the paper. 

A pair $(A,R)$ is called  a {\em finite  presentation}, we  denote it by $\langle A \mid R\rangle$ or $\langle a_1, \ldots,a_n \mid r_1, \ldots,r_m \rangle$. If $\mathcal V$ is a variety or a quasivariety of groups then a finite presentation $\langle A \mid R\rangle$ determines a group $G = F_{\mathcal V}(A)/\langle \langle R \rangle \rangle$, where  $F_{\mathcal V}(A)$ is a free group in $\mathcal V$ with basis $A$ and $\langle \langle R \rangle \rangle$ is the normal subgroup of $F_{\mathcal V}(A)$ generated by $R$. In this case we write $G = \langle A \mid R\rangle_{\mathcal V}$.    The {\em relation matrix} $M(A,R)$ of the presentation $\langle A \mid R\rangle$
is an  $m \times n$ integral matrix whose $(i,j)$-th entry is the sum of the exponents of the $a_j$'s  that occur in $r_i$. It was introduced by Magnus in \cite{Magnus} (see also \cite{LS}, Chapter II.3,   for its ties  to relation modules in groups). The number $d = |A| - |R|$, if non-negative, is called the {\em deficiency} of the presentation $\langle A \mid R\rangle$ (see \cite{LS}, Chapter II.2 for a short survey on groups with positive deficiency). The matrix $M(A,R)$ has {\em full rank} if its rank is equal to $\min\{|A|,|R|\}$, i. e., it is the maximum possible.

We showed in \cite{GMO} that if a finitely generated nilpotent group $G$ admits a full-rank presentation, then $G$ is either virtually free nilpotent (provided the   deficiency $d \geq 2$), or  virtually cyclic (if $d = 1$), or finite (if $d \leq 0$).  

Groups given in  the variety ${\mathcal M}$  of all metabelian groups  by full rank presentations also have a rather restricted structure, as witnessed by the following result.

\begin{thm}\label{t: main_structure_theorem}
	Let $G$ be a metabelian group given by a full-rank presentation  $G=\langle A\mid R\rangle_\mc{M}$. Then there exist two finitely generated subgroups $H$ and $K$ of $G$ such that:
	\begin{itemize}
	    \item  [1.] $H$ is a free metabelian group of rank  $\max(|A| -|R|,0)$,
	    \item [2.]  $K$ is a virtually abelian group with  $|R|$ generators, and its normal closure $L = K^G$ in $G$ is again virtually abelian;
	    \item [3.] $G =  \langle H, K \rangle = LH$. 
	  	\end{itemize}
	 Moreover, there is an algorithm that given a presentation $G=\langle A\mid R\rangle_\mc{M}$ finds a free basis for  the subgroup $H$ and a  generating set in $|R|$ generators for the subgroup $K$. 
\end{thm}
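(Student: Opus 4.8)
\emph{Proof plan.} Normalize the presentation by integral linear algebra, read $H$, $K$, $L$ off the normal form, dispatch the virtual abelianity of $K$ and $L$ with a cheap abelianization argument, and concentrate the real effort on the freeness of $H$.

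\textbf{Normalization.} Put $F=F_{\mathcal M}(A)$. Since $GL_n(\mathbb{Z})=\operatorname{Aut}(\mathbb{Z}^n)$ lifts to $\operatorname{Aut}(F)$, a column operation on $M(A,R)$ is induced by an automorphism of $F$, while a Nielsen transformation of the tuple $R$ (inverting a relator, or right-multiplying one relator by a conjugate of another) realizes an arbitrary row operation without changing $\langle\langle R\rangle\rangle$. As $M(A,R)$ has full rank, passing to Smith normal form lets me replace $\langle A\mid R\rangle$ by an effectively computable isomorphic presentation --- I reuse the names $A$, $R$, $G$ --- in which, with $k=\min(m,n)$, every relator has the form $r_i=a_i^{e_i}c_i$ with $e_i\neq0$ and $c_i\in F'$ for $i\le k$, and $r_i=c_i\in F'$ for $k<i\le m$ (the last case only if $m>n$). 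All subgroups below refer to this normal form.

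\textbf{The easy items, and the case $m\ge n$.} Set $K=\langle a_1,\dots,a_k\rangle\le G$ (padding by trivial generators to get $m$ of them if $m>n$) and $L=K^G$. In $G^{\mathrm{ab}}=\mathbb{Z}^n/\operatorname{rowsp}M(A,R)$ the image of $a_i$ has order dividing $e_i$ for $i\le k$, so $KG'/G'$ is finite; hence $K\cap G'$ is a finite-index abelian normal subgroup of $K$, and $K$ is virtually abelian. Since $G/G'$ is abelian, $LG'/G'=KG'/G'$ is still finite, so $L\cap G'$ is a finite-index abelian normal subgroup of $L$; thus $L$ is virtually abelian, and $L\trianglelefteq G$ by construction. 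This establishes item $2$, and item $3$ follows formally: $\langle H,K\rangle=\langle a_1,\dots,a_n\rangle=G$, and as $L\trianglelefteq G$ and $K\subseteq L$, the set $LH$ is a subgroup containing all $a_i$, so $LH=G$. If $m\ge n$ this finishes everything, $H$ being trivial of rank $0$: $G^{\mathrm{ab}}$ is finite, so $[G:G']<\infty$ and $G$ is itself virtually abelian.

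\textbf{Freeness of $H$ (the crux).} Assume $m<n$ and put $H=\langle a_{m+1},\dots,a_n\rangle\le G$. A subset of a free basis freely generates a free metabelian subgroup, so $F_0:=\langle a_{m+1},\dots,a_n\rangle\le F$ is free metabelian of rank $n-m$; it therefore suffices to show $F_0\cap N=1$ for $N=\langle\langle R\rangle\rangle$. Abelianizing, $F_0\cap N$ lands in $\operatorname{rowsp}M(A,R)\cap(0^m\times\mathbb{Z}^{n-m})=0$ (Smith form), so $F_0\cap N\subseteq F'$. The group $N/[N,F]$ carries the trivial $F$-conjugation action (as $gr_ig^{-1}\equiv r_i\bmod[N,F]$), hence is generated as an abelian group by $\overline{r_1},\dots,\overline{r_m}$ and maps onto $\operatorname{rowsp}M(A,R)\cong\mathbb{Z}^m$ with kernel $(N\cap F')/[N,F]$, which thus has free rank $0$, i.e.\ is finite. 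Since $F_0$ is torsion free it is enough to prove $F_0'\cap[N,F]=1$. Identify $F'$, via Fox derivatives (the Magnus embedding), with a submodule of $\bigoplus_{j=1}^{n}\mathbb{Z}[\mathbb{Z}^n]$; then $F_0'$ occupies the last $n-m$ coordinates with entries in $\mathbb{Z}[t_{m+1}^{\pm1},\dots,t_n^{\pm1}]$, while $[N,F]$ is the submodule generated by the Jacobians of the $[r_i,a_j]$, equal to $(1-t_j)J_{r_i}+(t_i^{e_i}-1)\mathbf e_j$ with $J_{r_i}=(1+t_i+\dots+t_i^{e_i-1})\mathbf e_i+t_i^{e_i}J_{c_i}$. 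Specializing $t_1=\dots=t_m=1$ fixes every element of $F_0'$ and sends a generic element of $[N,F]$ to $\sum_{i\le m}s_i\big(e_i\mathbf e_i+J_{c_i}\big|_{t_1=\dots=t_m=1}\big)$, with $s_i$ ranging over the augmentation ideal of $\mathbb{Z}[t_{m+1}^{\pm1},\dots,t_n^{\pm1}]$. For an element of $F_0'\cap[N,F]$ the first $m$ coordinates vanish, which forces $As=0$ where $s=(s_1,\dots,s_m)$ and $A=\operatorname{diag}(e_1,\dots,e_m)+\big(\partial c_i/\partial a_{i'}\big|_{t_1=\dots=t_m=1}\big)_{i',i}$; but $\det A$ has augmentation $e_1\cdots e_m\neq0$ (each $\partial c_i/\partial a_{i'}$ has augmentation $0$ because $c_i\in F'$), so $A$ is invertible over the fraction field, whence $s=0$ and the element is $0$. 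Hence $F_0\cap N=1$ and $H\cong F_{\mathcal M}(n-m)$. The main obstacle is exactly this last paragraph, the one place full rank is genuinely exploited: it makes $t_1,\dots,t_m$ the only consumed directions, keeps every $e_i$ nonzero, and forces $(N\cap F')/[N,F]$ finite. The tempting shortcut, a retraction $G\to H$, does exist when all $e_i=1$ but genuinely fails once some $e_i\ge2$, so the Fox-calculus step seems unavoidable; algorithmicity is automatic, since Smith normal form, Fox derivatives, and all the subgroup data are explicitly computable.
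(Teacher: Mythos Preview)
Your argument is correct, and the normalization, the virtual-abelianity of $K$ and $L$, and the case $m\ge n$ match the paper's proof essentially verbatim. The genuine divergence is in the freeness of $H$. The paper does not compute anything here: it invokes Romanovskii's Generalized Freiheissatz for $\mathcal{M}$ as a black box to get \emph{some} subset $A_1\subseteq A$ of size $n-m$ freely generating a free metabelian subgroup, and then observes (for $n-m\ge 2$) that $A_1$ cannot contain any $a_i$ with $i\le m$, since such an $a_i$ has a nontrivial power in $G'$ and would satisfy $[a_i^t,[a_i,a_j]]=1$; the case $n-m=1$ is handled by the obvious retraction to $\mathbb{Z}$. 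Your route is a direct, self-contained Fox-calculus computation showing $F_0\cap N=1$: you reduce to $F_0'\cap[N,F]=1$ via the (here trivial, in any case finite) quotient $(N\cap F')/[N,F]$, and then kill the intersection by specializing $t_1=\dots=t_m=1$ and using that the resulting $m\times m$ coefficient matrix has augmentation $e_1\cdots e_m\ne 0$. The paper's proof is much shorter \emph{given} Romanovskii, and makes transparent why full rank pins down exactly which generators are free; your proof is independent of that citation and exposes explicitly where the nonvanishing of the $e_i$ (i.e.\ full rank) is consumed. One small remark: your argument in fact gives $(N\cap F')/[N,F]=0$ outright, since an abelian group on $m$ generators surjecting onto $\mathbb{Z}^m$ is free of rank $m$; so the torsion-freeness detour is not even needed.
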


\medskip \noindent
The result above complements the Generalized Freiheissatz for ${\mathcal M}$.  In \cite{Romanovskii} Romanovskii  proved that if a  metabelian  group $G$ is given in the variety  ${\mathcal M}$ by a finite presentation $\langle A \mid R\rangle_{\mc{M}}$ of deficiency $d  \geq 1$,  then there is a subset of generators $A_0 \subseteq A$ with $|A_0| = d$ which freely generates a free metabelian  subgroup $H = \langle A_0\rangle$.   Theorem  \ref{t: main_structure_theorem} shows that if the presentation $\langle A \mid R\rangle_{\mc{M}}$ has full rank then  there is a free metabelian of rank $d$ subgroup $H$ of $G$ and,  in addition,  there are  virtually abelian subgroups $K$ and $L$ as described  in items 2) and 3) above, such that $G = HL$.  Two remarks are in order here. First, the subgroup $H$ in Theorem  \ref{t: main_structure_theorem} is not necessary equal to $\langle A_0\rangle$ for a suitable  $A_0 \subseteq A$ as in the Romanovskii's result. However, for a given full rank presentation $G =  \langle A \mid R\rangle_{\mc{M}}$  of $G$ one can find algorithmically another full rank presentation of $G$, which is in Smith normal form (see Section \ref{se:Freiheissatz}), such that the subgroup $H$ is, indeed, generated by a suitable $A_0 \subseteq A$ and $K$ is generated by $A \smallsetminus A_0$.  Second, even if the presentation $G =  \langle A \mid R\rangle_{\mc{M}}$ is in 
 Smith normal form, but it is not of full rank, then the subgroups $K$ and $L$  as in Theorem  \ref{t: main_structure_theorem}  may not necessarily exist
(see details in Section \ref{se:Freiheissatz}).

In another direction, we showed  in \cite{GMO}  that in any direct decomposition of a nilpotent group $G$ given in the nilpotent variety   $\mathcal{N}_c$, $c \geq 2$, by a finite full rank presentation of deficiency $\geq 1$,  all, but one, direct factors are finite.  

A similar result holds in the variety $\mathcal{M}$ as well.

\begin{thm}\label{t: Full-rank_direct_decomposition}
	Let $G$ be a finitely generated metabelian group given by a full-rank presentation $G=\langle A\mid R \rangle_{\mc{M}}$ such that $|R|\leq |A|-1$. Then  in any direct decomposition of $G$ all, but one, direct factors are virtually abelian. 
\end{thm}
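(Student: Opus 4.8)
The plan is to argue by contradiction after a harmless reduction to two factors. If $G=\prod_{i=1}^{k}G_i$ and two of the factors, say $G_1$ and $G_2$, are both not virtually abelian, then re-bracketing $G=G_1\times\bigl(G_2\times\prod_{i\ge 3}G_i\bigr)$ exhibits $G$ as a product of two non-virtually-abelian groups, since a direct factor of a virtually abelian group (being simultaneously a subgroup and a quotient) is again virtually abelian. Hence it suffices to prove that $G$ is \emph{not} a direct product $G=G_1\times G_2$ of two groups that are both not virtually abelian; note that $G_1,G_2$ are themselves finitely generated metabelian.

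I would build on two facts. First, Theorem~\ref{t: main_structure_theorem} provides a free metabelian subgroup $H\le G$ of rank $d:=|A|-|R|\ge 1$. Second, a finitely generated metabelian group with finite abelianization is virtually abelian: by P. Hall's theorem its derived subgroup is finitely generated as a module over the group ring of the (now finite) abelianization, hence is a finitely generated abelian group, so the group is (finitely generated abelian)-by-finite. Since the full-rank hypothesis makes the relation matrix $M(A,R)$ have rank $|R|$, we get $G^{ab}\cong\mathbb{Z}^{d}\oplus T$ with $T$ finite, i.e.\ $\mathrm{rk}\,G^{ab}=d$. This already settles the case $d=1$: from $G^{ab}=G_1^{ab}\times G_2^{ab}$ of torsion-free rank $1$, one of the $G_i^{ab}$ is finite, so that $G_i$ is virtually abelian.

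Assume now $d\ge 2$. Set $Q:=H^{ab}\cong\mathbb{Z}^{d}$ and regard $H'=[H,H]$ as a module over $R:=\mathbb{Z}[Q]$; via the Magnus embedding $H'\hookrightarrow R^{d}$ this module is torsion-free, and it is nonzero precisely because $d\ge 2$. Let $\pi_i\colon G\to G_i$ be the projections, and let $u_j,v_j$ be the images of the $j$-th free generator of $H$ in $G_1^{ab}$, respectively $G_2^{ab}$, along $H\hookrightarrow G\xrightarrow{\pi_i}G_i\to G_i^{ab}$. Since each $G_i'$ carries a natural $\mathbb{Z}[G_i^{ab}]$-module structure (conjugation factors through $G_i^{ab}$), the restriction $\pi_i|_{H'}\colon H'\to G_i'$ is $R$-linear once we let $Q$ act on $G_1'$ through $t_j\mapsto u_j$ and on $G_2'$ through $t_j\mapsto v_j$. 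Moreover $H'\cap G_1$ and $H'\cap G_2$ are $R$-submodules of $H'$ (each is invariant under conjugation by $H$, as $H'\trianglelefteq H$ and $G_i\trianglelefteq G$), and $(H'\cap G_1)\cap(H'\cap G_2)=H'\cap(G_1\cap G_2)=1$.

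The crux is then a dichotomy on the images $u_j$ (and symmetrically the $v_j$). If $u_1,\dots,u_d$ are $\mathbb{Z}$-independent in $G_1^{ab}$, then $\mathrm{rk}\,G_1^{ab}\ge d$, whence $\mathrm{rk}\,G_2^{ab}=d-\mathrm{rk}\,G_1^{ab}\le 0$, so $G_2^{ab}$ is finite and $G_2$ is virtually abelian — a contradiction. Hence some $\mathbf a\in\mathbb{Z}^{d}\setminus\{0\}$ satisfies $\prod_j u_j^{a_j}=1$ in $G_1^{ab}$; put $f:=t^{\mathbf a}-1\ne 0$ in $R$, and likewise obtain $g:=t^{\mathbf b}-1\ne 0$ in $R$ with $\prod_j v_j^{b_j}=1$ in $G_2^{ab}$. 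Since $t^{\mathbf a}$ acts trivially on $G_1'$, a one-line check ($\pi_1(f\cdot w)=(t^{\mathbf a}\cdot\pi_1(w))\,\pi_1(w)^{-1}=1$) gives $f\cdot H'\subseteq\ker(\pi_1|_{H'})=H'\cap G_2$, and symmetrically $g\cdot H'\subseteq H'\cap G_1$. Applying $g$ to the first inclusion and $f$ to the second (using that $H'\cap G_1,H'\cap G_2$ are submodules and $R$ is commutative) yields $fg\cdot H'\subseteq (H'\cap G_1)\cap(H'\cap G_2)=1$. But $R=\mathbb{Z}[\mathbb{Z}^{d}]$ is an integral domain, so $fg\ne 0$ is a non-zero-divisor, while $H'$ is torsion-free over $R$; hence $fg\cdot H'=0$ forces $H'=0$, contradicting $d\ge 2$.

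I expect the third and fourth paragraphs to carry the real weight: the delicate point is organizing $H'$ as a torsion-free $\mathbb{Z}[\mathbb{Z}^{d}]$-module compatibly with the decomposition $G'=G_1'\times G_2'$, and converting a $\mathbb{Z}$-relation among the images $u_j$ (forced once $\mathrm{rk}\,G_1^{ab}<d$) into an operator that pushes $H'$ entirely into one factor — after that, torsion-freeness over a domain finishes things quickly. The remaining ingredients (the two-factor reduction, the shape of $G^{ab}$ from full rank, and the P. Hall argument) are routine bookkeeping.
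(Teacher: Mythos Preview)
Your argument is correct, but it is a genuinely different route from the paper's. The paper's proof is a three-line reduction to the nilpotent case: project $G=G_1\times\cdots\times G_k$ onto $G/\gamma_3(G)$, observe that this quotient carries the full-rank presentation $\langle A\mid R\rangle_{\mc{N}_2}$, invoke the result from \cite{GMO} that in any direct decomposition of such a nilpotent group all but one factor is \emph{finite}, and then note that $\gamma_3(G)\le G'$ is abelian, so each $G_i$ with finite image is abelian-by-finite. No module theory, no Magnus embedding, no torsion-freeness over a Laurent polynomial ring.

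Your proof, by contrast, stays entirely inside the metabelian world: you exploit the free metabelian subgroup $H$ supplied by Theorem~\ref{t: main_structure_theorem}, pass to the $\mathbb{Z}[H^{ab}]$-module $H'$, and use torsion-freeness over an integral domain to derive a contradiction from two annihilating elements $f=t^{\mathbf a}-1$ and $g=t^{\mathbf b}-1$. This is self-contained --- it does not lean on the nilpotent direct-decomposition theorem --- and it showcases the module-theoretic backbone of metabelian groups, at the cost of being substantially longer. The paper's approach buys brevity by recycling previously established machinery; yours buys independence from that machinery and would survive even if one wanted to avoid the nilpotent detour.
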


\subsection{Diophantine problems}

In Section 3 we study the Diophantine problem in finitely generated metabelian  groups given by  full rank presentations. This  is a continuation of research  in \cite{GMO2} and \cite{GMO}.

  Recall, that the \emph{Diophantine problem} in an algebraic structure $\mathcal{A}$ (denoted $\mc{D}(\mc{A})$)  is the task to determine whether or not  a given finite system of equations with constants in $\mc{A}$ has a  solution in $\mathcal{A}$. $D(\mathcal{A})$ is \emph{decidable} if there is an algorithm that given a finite system $S$ of equations with constants in $\mathcal{A}$ decides whether or not $S$ has a solution in $\mc{A}$. Furthermore, $\mc{D}(\mc{A})$ is  \emph{reducible} to  $\mc{D}(\mc{M})$, for   another structure $\mc{M}$,  if there is an algorithm that for any finite system of equations $S$  in $\mc{A}$ computes a finite system of equations $S_{\mc{M}}$ in $\mc{M}$ such that $S$ has a solution in $\mc{A}$ if and only if $S_{\mc{M}}$  has a solution in $\mc{M}$. 

 Note that due to the classical result of Davis,  Putnam, Robinson and Matiyasevich, the Diophantine problem  $\mc{D}(\mathbb{Z})$ in the ring of integers $\mathbb{Z}$ is undecidable \cite{mat, DPR}.  Hence if $\mc{D}(\mathbb{Z})$ is reducible to $\mc{D}(\mc{M})$, then  $\mc{D}(\mc{M})$ is also  undecidable.  

To prove  that  $\mc{D}(\mc{A})$ reduces to  $\mc{D}(\mc{M})$ for some structures  $\mc{A}$ and  $\mc{M}$  it suffices to show that  $\mc{A}$ is  interpretable by equations (or \emph{e-interpretable}) in $\mc{M}$.  E-interpretability is a variation of the classical notion of the first-order interpretability, where instead of arbitrary first-order formulas finite systems of equations are used as the interpreting formulas (see Definition \ref{d: e-int} for details).  The main relevant property of such interpretations is that if $\mc{A}$ is e-interpretable in $\mc{M}$ then $\mc{D}(\mc{A})$  is reducible to $\mc{D}(\mc{M})$ by a polynomial time many-one reduction (Karp reductions).

\begin{thm}\label{t: Diophantine_Problem_full_rank}
	Let $G$ be a metabelian group given by a full-rank presentation $G=\langle A\mid R \rangle_{\mc{M}}$. Then the following hold:
	\begin{enumerate}
	    \item If $|R| \leq |A|-2$ then the ring of integers $\mbb{Z}$ is e-interpretable in $G$, and the Diophantine problem in $G$ is undecidable.
	    \item If $|R| \geq |A|$ then the Diophantine problem of $G$ is decidable (in fact, the first-order theory of $G$ is decidable).
	\end{enumerate}
\end{thm}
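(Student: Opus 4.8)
The plan is to read both statements off the structural description of $G$ supplied by Theorem~\ref{t: main_structure_theorem}.

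Consider first item~(2). If $|R|\ge|A|$ then the free metabelian subgroup $H$ in Theorem~\ref{t: main_structure_theorem} has rank $\max(|A|-|R|,0)=0$, so $H=1$ and $G=LH=L$ is virtually abelian. (Equivalently, full rank forces $\operatorname{rank}M(A,R)=|A|$, so $G/G'$ is finite, $G'$ is then a finitely generated module over the finite ring $\mathbb{Z}[G/G']$ and hence a finitely generated abelian group, and $G$ is abelian-by-finite.) I would then invoke the classical fact that every finitely generated virtually abelian group has decidable first-order theory, which builds on Szmielew's decidability of the theory of abelian groups; since a finite system of equations is an existential sentence once its unknowns are quantified, $\mathcal{D}(G)$ is decidable a fortiori.

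Now item~(1), where $d:=|A|-|R|\ge 2$. Theorem~\ref{t: main_structure_theorem} gives a free metabelian subgroup $H\le G$ of rank $d$ and a decomposition $G=LH$ with $L$ virtually abelian and normal; in particular $G$ is not virtually abelian, since $H$ already is not (its derived subgroup is not finitely generated). The plan is to e-interpret the ring $\mathbb{Z}$ in $G$; together with the undecidability of $\mathcal{D}(\mathbb{Z})$ (Davis--Putnam--Robinson--Matiyasevich) and the reduction property of e-interpretations, this yields undecidability of $\mathcal{D}(G)$. The starting point is that $\mathbb{Z}$ is e-interpretable in a free metabelian group of rank $2$, as established in \cite{GMO2}: concretely, picking two members $x,y$ of a free basis and $c=[x,y]$, the submodule $C:=c^{\,\mathbb{Z}[\bar x^{\pm1},\bar y^{\pm1}]}$ of the derived subgroup is free of rank one over $\mathbb{Z}[t_1^{\pm1},t_2^{\pm1}]$ (Magnus embedding), so $C/((t_1-1)C+(t_2-1)C)\cong\mathbb{Z}$ via the augmentation $t_1,t_2\mapsto1$ --- this gives the additive group --- while multiplication is recovered from the commutator pairing $[x^{a},y^{b}]=(1+t_1+\cdots+t_1^{a-1})(1+t_2+\cdots+t_2^{b-1})\,c$, whose augmentation is $ab$; all of this is cut out by a finite system of equations in the parameters $x,y$.

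The remaining task is to transfer this interpretation from $H$ to $G$, and here both $d\ge 2$ and the decomposition $G=LH$ are genuinely used: an e-interpretation must be written by equations over all of $G$, so one has to ensure the defining systems do not acquire spurious solutions involving $L$. Virtual abelianity and normality of $L$ are what make this feasible --- centralizers such as $C_G(x)$ differ from $C_H(x)=\langle x\rangle$ only by a virtually abelian, essentially central correction, and $G'$ differs from $H'$ only in directions lying inside the virtually abelian subgroup $G'\cap L$. After, if necessary, replacing $C$ by a finite-index submodule carrying the same quotient $\cong\mathbb{Z}$ and adjoining finitely many equations that annihilate the contribution of $L$, the systems cutting out $C$, the submodule $(t_1-1)C+(t_2-1)C$, and the commutator pairing become valid over $G$, yielding the desired e-interpretation of $\mathbb{Z}$ in $G$. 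I expect precisely this bookkeeping --- isolating $C$ (or a suitable finite-index submodule) inside $G'$ by equations and showing that $L$ contributes only controllable terms --- to be the main technical obstacle.
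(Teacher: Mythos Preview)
Your treatment of item~(2) is fine and coincides with the paper's: Theorem~\ref{t: main_structure_theorem} gives $G$ virtually abelian, and then one cites decidability of the elementary theory for such groups (the paper quotes Ershov).

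For item~(1), however, your approach diverges from the paper's and carries a genuine gap. You try to lift an e-interpretation of $\mathbb{Z}$ from the free metabelian \emph{subgroup} $H\le G$ up to $G$, using the product decomposition $G=LH$. The difficulty you flag is real and is not resolved by your sketch: an e-interpretation in $H$ uses existential quantifiers ranging over $H$, and once you write the same equations in $G$ those quantifiers range over all of $G$. That $L$ is virtually abelian and normal does not, by itself, let you ``annihilate the contribution of $L$'' by adjoining finitely many equations; for instance your centralizer claim $C_G(x)\approx\langle x\rangle$ up to something ``essentially central'' is not justified (nothing forces elements of $L$ to commute with $x$ only in a controllable way), and $G'\cap L$ being virtually abelian does not isolate $H'$ inside $G'$ by equations. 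So the key step --- producing e-definable sets in $G$ that behave as they do in $H$ --- is missing, and it is not clear it can be carried out along these lines.

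The paper sidesteps all of this by passing to a \emph{quotient} rather than a subgroup. Since $[G',G']\le\gamma_3(G)$, the group $G/\gamma_3(G)$ has the full-rank presentation $\langle A\mid R\rangle_{\mathcal{N}_2}$ of deficiency $\ge 2$, so by the nilpotent result (Theorem~\ref{t: Dioph_full_rank_nilpotent}) the ring $\mathbb{Z}$ is e-interpretable in $G/\gamma_3(G)$. Now $\gamma_3(G)$ is a verbal subgroup, and by Segal's theorem every verbal subgroup of a finitely generated metabelian group has finite width; hence $\gamma_3(G)$ is e-definable in $G$, so $G/\gamma_3(G)$ is e-interpretable in $G$ (Propositions~\ref{extra_prop1} and~\ref{p: interp_verbal_elliptic}). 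Transitivity then gives $\mathbb{Z}$ e-interpretable in $G$. The moral: e-interpretability passes cleanly to quotients by e-definable normal subgroups, but not to overgroups of a subgroup; routing through $G/\gamma_3(G)$ converts the problem into one already solved for nilpotent groups and avoids the transfer problem entirely.
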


This result is analogous to the one obtained for nilpotent groups in \cite{GMO}.

\begin{remark}
For the case of deficiency $1$ (i.e., $m=n-1$), decidability of the Diophantine problem over $G$ remains an interesting open problem. 
The recent work \cite{KLM_19} proves decidability of the Diophantine problem in $BS(1,n)=\langle a_1,a_2\mid a_1^{n-1}=[a_1,a_2]\rangle$.
Also  some deficiency $1$  presentations define cyclic groups, which have decidable Diophantine problem  \cite{Ershov}. 
\end{remark}
\begin{conj}
Let $G$ be a metabelian group given by a full-rank presentation $G=\langle A\mid R \rangle_{\mc{M}}$. If $|A|-|R| = 1$ then 
the Diophantine problem in $G$  is decidable. 

\end{conj}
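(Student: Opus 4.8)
The plan is to exploit the structure theorem (Theorem~\ref{t: main_structure_theorem}) precisely in the borderline deficiency-$1$ case, where the free metabelian part collapses to a single copy of $\mathbb{Z}$, and thereby reduce $\mc{D}(G)$ to a decidability question about a very restricted family of exponential Diophantine systems. First I would apply Theorem~\ref{t: main_structure_theorem} with $|A|-|R|=1$: this yields $G = LH$ with $H \cong \mathbb{Z}$ (free metabelian of rank $1$) and $L = K^G$ a virtually abelian normal subgroup, so that $G/L$ is a cyclic quotient of $H$. This already gives a clean dichotomy. If $H \cap L \neq 1$, then $[G:L] = [H:H\cap L] < \infty$, so $G$ is virtually abelian and $\mc{D}(G)$ is decidable (indeed its full first-order theory is decidable, exactly as in the case $|R|\geq |A|$ of Theorem~\ref{t: Diophantine_Problem_full_rank}). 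Hence the only substantive case is $H \cap L = 1$, where $G = L \rtimes \mathbb{Z}$ is a genuine virtually-abelian-by-$\mathbb{Z}$ group and the generator $t$ of $H$ acts on $L$ by a fixed automorphism.

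Next I would reduce to a clean arithmetic core. Choosing a finite-index abelian normal subgroup $A \cong \mathbb{Z}^s \times (\text{finite}) \trianglelefteq L$ that is also normalized by $t$, one is led to a group commensurable with $\mathbb{Z}^s \rtimes_\phi \mathbb{Z}$, where $\phi \in \mathrm{GL}_s(\mathbb{Z})$ records the conjugation action of $t$. Writing each group variable as a pair $(v,k)$ with $v \in \mathbb{Z}^s$ and $k \in \mathbb{Z}$, any equation in $G$ splits into (a) a system of ordinary polynomial (in fact linear) equations over $\mathbb{Z}$ governing the $t$-exponents $k$, and (b) a system over $\mathbb{Z}^s$ whose coefficients are entries of the matrix powers $\phi^{k}$; equivalently, a system over the order $\mathbb{Z}[x,x^{-1}]/(\chi_\phi(x))$, with $\chi_\phi$ the characteristic polynomial of $\phi$, whose coefficients depend exponentially on the integer unknowns. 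A technical point to settle here is that this passage to a finite-index subgroup preserves decidability of the Diophantine problem in both directions; I would handle it via mutual e-interpretability of $G$ with its finite-index subgroup, treating the finitely many cosets as additional constant parameters.

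The problem then reduces to deciding solvability of systems that couple polynomial constraints on the exponents $k_i \in \mathbb{Z}$ with \emph{twisted} linear equations over $\mathbb{Z}^s$ whose coefficients are the geometric progressions $\phi^{k_i}$. The case $s=1$, $\phi=(n)$ is exactly $BS(1,n)$, for which \cite{KLM_19} establishes decidability, so the aim is to generalize that method to arbitrary $\phi \in \mathrm{GL}_s(\mathbb{Z})$. The natural route is to analyze these systems locally: the eigenvalues of $\phi$ are algebraic integers, the powers $\phi^k$ obey linear recurrences, and one can hope to detect solvability by examining the system over the relevant $\mathfrak{p}$-adic completions together with the archimedean places, combining $p$-adic estimates on the recurrences (of Skolem--Mahler--Lech type) with the decidability of the existential theories of the local fields and rings involved.

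The hard part is precisely this last step. Exponential Diophantine equations over $\mathbb{Z}$ are undecidable in full generality, so decidability here cannot come for free: the entire leverage is that the rank-$1$ free part forces every exponential term to be a power of the single automorphism $\phi$, yielding systems of the rigid shape ``one geometric progression per variable.'' Proving that this rigidity suffices---uniformly in $\phi$, and in the presence of the coupled polynomial constraints on the $k_i$ and of the torsion contributed by the finite part of $L$---is the genuine obstacle, and is likely to require either a substantial extension of the $p$-adic techniques of \cite{KLM_19} or the resolution of known open cases of exponential Diophantine decidability. This is why the statement is posed as a conjecture rather than a theorem.
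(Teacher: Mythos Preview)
The statement is posed in the paper as a \emph{conjecture}, not a theorem; the paper offers no proof, only the remark preceding it that the deficiency-$1$ case is open, that $BS(1,n)$ was settled in \cite{KLM_19}, and that some deficiency-$1$ presentations yield cyclic groups. Your write-up is explicitly a research outline and you correctly flag at the end that the decisive step is open, so there is no proof in the paper to compare against.

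That said, one step of your outline needs repair. You propose to choose ``a finite-index abelian normal subgroup $A \cong \mathbb{Z}^s \times (\text{finite}) \trianglelefteq L$'' and to pass to a group commensurable with $\mathbb{Z}^s \rtimes_\phi \mathbb{Z}$ for some $\phi \in \mathrm{GL}_s(\mathbb{Z})$. But the paper explicitly notes (in the proof of Lemma~\ref{t: structure_Smith_normal_form}) that $L$ need not be finitely generated; already for $BS(1,n)$ the abelian normal part is $\mathbb{Z}[1/n]$, which has no finite-index subgroup of the form $\mathbb{Z}^s \times (\text{finite})$. Thus $G$ is not in general commensurable with any $\mathbb{Z}^s \rtimes \mathbb{Z}$, and there is no single matrix $\phi$ whose integer powers encode the action. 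The correct bookkeeping is that the finite-index abelian subgroup of $L$ is a finitely generated $\mathbb{Z}[t,t^{-1}]$-module (finitely generated as a module, not as an abelian group), and the ``exponential'' coefficients that appear in your step~(b) live in a quotient of $\mathbb{Z}[t,t^{-1}]$ rather than in a matrix ring over $\mathbb{Z}$. The $BS(1,n)$ analysis in \cite{KLM_19} already works in this setting (the module there is $\mathbb{Z}[t,t^{-1}]/(t-n) \cong \mathbb{Z}[1/n]$), so your broad strategy of extending those $p$-adic/recurrence techniques remains plausible, but the reduction as you stated it does not go through.
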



\subsection{Random finitely presented groups: the few-relations model}\label{s: few_relators_model}

The first notion of \emph{genericity} or a \emph{random group} in the class of finitely presented groups is due to Gromov \cite{gromov_hyperbolic}, where he introduced what is now known as the few relators  model. A slightly different  approach was suggested by Olshanskii \cite{olshansky_random}  and Arzhantseva and Olshanskii \cite{arzhantsevsa_olshanski}. 

Nowadays, the few relators model can be described as follows.   Let $m, n$ be fixed positive integers. Consider, in the notation above,  the set $S(n,m)$ of all finite presentations $\langle A \mid R\rangle$ with  $|A| = n, |R| = m$. For a given positive integer $\ell$ consider a finite subset $S(n,m,\ell)$ of $S(n,m)$ which consists of all presentations $\langle A \mid R\rangle \in S(n,m)$, where each relator in $R$ has length precisely $\ell$. Now for a given property of groups $P$ consider a subset $S_P(n,m,\ell)$ of $S(n,m,\ell)$  of all presentations $\langle A \mid R\rangle \in S(n,m,\ell)$ which define groups that  satisfy $P$. The property $P$ is termed \emph{$(n,m)$-generic} if 
$$
\lim_{\ell \to \infty}\frac{|S_P(n,m,\ell)|}{|S(n,m,\ell)|} = 1.
$$
Here $|X|$ denotes the cardinality of a set $X$.
In this event we also say sometimes that   $P$ holds for $\langle A \mid R\rangle$ \emph{asymptotically almost surely as $\ell \to \infty$}. 

In the book \cite{gromov_hyperbolic} Gromov stated that the group property of being hyperbolic is $(n,m)$-generic for all $n$ and $m$. Later Olshanskii \cite{olshansky_random} and Champetier \cite{champetier}  gave rigorous proofs of this result. We refer to \cite{Olivier} for a survey on random finitely presented groups and to Kapovich and Schupp \cite{kapovich_shupp} on group theoretic models of randomness and genericity.  

Observe that  the few relators model described above  concerns classical finitely presented groups. However, this approach  can be utilized as well for finitely presented groups in any fixed variety of groups $\mathcal{V}$, in particular, for finitely generated metabelian groups given  in  the variety ${\mathcal M}$ of all metabelian groups by finite presentations $\langle A \mid R\rangle_{\mathcal M}$.  Note that every finitely generated group in ${\mathcal M}$ has a finite presentation in ${\mathcal M}$.

For the variety of nilpotent groups ${\mathcal N}_c$  (of a fixed nilpotency class $c$), this approach has been studied recently in \cite{Duchin, GMO}. Other models of randomness for the groups in ${\mathcal N}_c$   can be found in \cite{Dymarz, GMO2}. To the best of our knowledge, there is no study of random metabelian groups (in any model) prior to this paper. 

The following result is fundamental to our approach.

\begin{thm} \cite{GMO} \label{t: random_walks}
Let $R$ be a set of $m$ words of length $\ell$ in an alphabet $A^{\pm 1} = \{a_1^{\pm 1}, \dots, a_n^{\pm 1} \}$, i.e.\ each word is obtained by successively concatenating  randomly chosen letters from $A^{\pm 1}$ with uniform probability. Then $M(A, R)$ has full rank (i.e.\ ${\it rank}(M(A, R))=\min\{n, m\}$) asymptotically almost surely as $\ell \to \infty$.
\end{thm}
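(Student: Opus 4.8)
The plan is to reduce the full-rank statement for the integer relation matrix $M(A,R)$ to a genericity statement about random walks on $\mathbb{Z}^n$, and then to extract full rank from a local central limit theorem. Observe first that the $i$-th row of $M(A,R)$ is exactly the abelianization image of the random word $r_i$: if $r_i$ is built by concatenating $\ell$ i.i.d.\ uniform letters from $A^{\pm 1}$, then its row is the position after $\ell$ steps of the lazy-type random walk on $\mathbb{Z}^n$ whose increments are $\pm e_1, \dots, \pm e_n$ each with probability $1/(2n)$. Since the $m$ relators are chosen independently, the rows of $M(A,R)$ are $m$ independent copies $X^{(1)}_\ell, \dots, X^{(m)}_\ell$ of this walk. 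We want $\mathbb{P}\big(\mathrm{rank}(M(A,R)) = \min\{n,m\}\big) \to 1$ as $\ell \to \infty$.

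I would argue by induction on the number of rows, building up the rank one row at a time. Suppose the first $k < \min\{n,m\}$ rows are already linearly independent (over $\mathbb{Q}$), spanning a subspace $V_k \subseteq \mathbb{Q}^n$ of dimension $k$; it suffices to show that the probability that the next independent row $X^{(k+1)}_\ell$ lands in $V_k$ tends to $0$. The lattice points in $V_k \cap \mathbb{Z}^n$ form a sublattice of rank at most $k \le n-1$, i.e.\ a set of density zero in $\mathbb{Z}^n$; more quantitatively, the number of such points inside a ball of radius $O(\sqrt\ell)$ is $O(\ell^{k/2}) = O(\ell^{(n-1)/2})$. On the other hand, by the Local Central Limit Theorem (2.1.3 \cite{Lawler}) for this aperiodic, mean-zero, finite-variance walk, $\mathbb{P}(X^{(k+1)}_\ell = x)$ is uniformly $O(\ell^{-n/2})$ for all $x$ (with a Gaussian profile on the scale $\sqrt\ell$), and the walk's position has norm $O(\sqrt{\ell\log\ell})$ with overwhelming probability. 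Summing the LCLT bound over the $O(\ell^{(n-1)/2})$ lattice points of $V_k$ in the relevant ball gives $\mathbb{P}(X^{(k+1)}_\ell \in V_k) = O(\ell^{(n-1)/2} \cdot \ell^{-n/2}) = O(\ell^{-1/2}) \to 0$. A union bound over the at most $\min\{n,m\} \le n$ inductive steps (and over the — finitely many, in fact at most $\binom{m}{k}$ — choices of which previously-selected rows form the independent set, which only costs a constant factor) then yields the claim. One should be slightly careful about parity/periodicity of the walk — the increments $\pm e_i$ make the walk bipartite on $\mathbb{Z}^n$ — but the LCLT handles this by restricting to the correct coset and only changes constants; alternatively one composes two steps to get an aperiodic walk.

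The main obstacle is making the counting of lattice points in $V_k$ genuinely uniform in the (random, $\ell$-dependent) subspace $V_k$: one needs a bound on $\#\{x \in V_k \cap \mathbb{Z}^n : \|x\| \le r\}$ that does not depend on the geometry of $V_k$. This follows from the general fact that a rank-$k$ sublattice of $\mathbb{Z}^n$ has at most $C_{n}(1+r)^k$ points in a ball of radius $r$ (for a constant $C_n$ depending only on $n$, via successive minima / a packing argument), which is exactly what is needed since $k \le n-1$. The remaining ingredients — the LCLT itself and the concentration of $\|X_\ell\|$ — are standard and quotable, so the proof is essentially the combination of these two estimates with the row-by-row induction. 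I would also remark that the same argument shows the stronger statement that $M(A,R)$ is full rank with probability $1 - O(\ell^{-1/2})$, though only the qualitative "a.a.s." is needed here.
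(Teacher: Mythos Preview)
The paper does not prove this theorem; it is quoted from \cite{GMO}, so there is no in-paper argument to compare against. That said, the preamble of the paper declares a dedicated theorem environment for the Local Central Limit Theorem (2.1.3 of \cite{Lawler}), which is a strong hint that the LCLT is exactly the tool used in \cite{GMO}. Your proposal is therefore almost certainly the intended argument, and it is correct.

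Two small points worth tightening. First, you switch between a concentration radius of order $\sqrt{\ell}$ and of order $\sqrt{\ell\log\ell}$; carrying the latter through consistently gives
\[
\mathbb{P}\big(X^{(k+1)}_\ell \in V_k\big)\;=\;O\!\left(\ell^{-1/2}(\log\ell)^{(n-1)/2}\right)+\mathbb{P}\big(\|X_\ell\|>C\sqrt{\ell\log\ell}\big),
\]
which still tends to $0$, but the clean $O(\ell^{-1/2})$ you quote at the end is off by a polylog. Second, the uniform bound $\#\{x\in V_k\cap\mathbb{Z}^n:\|x\|\le r\}\le C_n(1+r)^k$ is exactly as easy as you suggest: since $V_k\cap\mathbb{Z}^n$ lies in a $k$-dimensional real subspace and has shortest nonzero vector of length at least $1$, disjoint radius-$1/2$ balls in that subspace give the packing bound $(2r+1)^k$; no successive-minima machinery is needed. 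The induction/union-bound bookkeeping is fine: for $m\le n$ one bounds $\mathbb{P}(\mathrm{rank}<m)$ by $\sum_{k=1}^{m}\sup_{\dim V\le k-1}\mathbb{P}(X_\ell\in V)$, each term handled uniformly as above, and for $m>n$ one restricts to the first $n$ rows. The parity remark is also correct and only affects constants in the LCLT.
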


\begin{cor}
Let $\langle A\mid R\rangle=\langle a_1, \dots, a_n \mid r_1,\dots, r_m\rangle$ be a  presentation where all relators $r_i$ have length $\ell> 0$. Then the presentation $\langle A\mid R\rangle$ has full rank asymptotically almost surely as $\ell$ tends to infinity.
\end{cor}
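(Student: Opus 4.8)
The plan is to deduce this statement directly from Theorem~\ref{t: random_walks}, which already carries all of the probabilistic content; what remains is purely a translation between the ``random walk'' language of that theorem and the few-relators genericity language of Section~\ref{s: few_relators_model}. First I would recall that, by definition, the presentation $\langle A\mid R\rangle$ has full rank precisely when its relation matrix $M(A,R)$ has rank $\min\{n,m\}$. Thus, taking for $P$ the presentation-level property ``$M(A,R)$ has full rank'' (equivalently, the group property ``admits a full-rank presentation''), the claim is exactly that
$$
\frac{|S_P(n,m,\ell)|}{|S(n,m,\ell)|} \longrightarrow 1 \qquad \text{as } \ell\to\infty ,
$$
where $S(n,m,\ell)$ is the set of presentations with $|A|=n$, $|R|=m$ and every relator of length exactly $\ell$, and $S_P(n,m,\ell)$ is the subset of those whose relation matrix has full rank.

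Next I would identify the uniform probability measure on $S(n,m,\ell)$ with the product measure under which each relator $r_i$ is an independent word obtained by concatenating $\ell$ letters chosen uniformly and independently from $A^{\pm 1}$: a word of length $\ell$ is just an element of $(A^{\pm 1})^{\ell}$, so ``uniform over $S(n,m,\ell)$'' and ``$m$ independent uniform random words of length $\ell$'' are literally the same distribution. Under this identification the $i$-th row of $M(A,R)$ is exactly the abelianization of the random word $r_i$, i.e.\ the position after $\ell$ steps of the lazy random walk on $\mathbb{Z}^n$ whose increments pick $\pm e_j$ with probability $\tfrac{1}{2n}$ each. Hence $R$ is precisely a set of $m$ random words of length $\ell$ in the sense of Theorem~\ref{t: random_walks}.

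Applying Theorem~\ref{t: random_walks} then gives that $M(A,R)$ has full rank asymptotically almost surely as $\ell\to\infty$, which is exactly the displayed limit, and hence the corollary. I do not expect any real obstacle here: the entire difficulty is packaged inside Theorem~\ref{t: random_walks}, and the only things to check are the (immediate) coincidence of the two probability models and the trivial observation that the few-relators model uses arbitrary words of length $\ell$, not cyclically reduced ones, so no extra conditioning is needed.
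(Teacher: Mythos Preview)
Your proposal is correct and matches the paper's approach: the corollary is stated in the paper without proof, as an immediate restatement of Theorem~\ref{t: random_walks}, and your argument simply makes explicit the identification of the uniform measure on $S(n,m,\ell)$ with the product measure on $m$ independent length-$\ell$ words. (One harmless terminological slip: the walk on $\mathbb{Z}^n$ with increments $\pm e_j$ is the simple random walk, not a lazy one.)
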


One of the main appeals of full-rank presentations is that they occur asymptotically almost surely in the few-relators model for random groups in any variety $\mathcal{V}$, in particular,  in the variety ${\mathcal M}$.
Hence, random metabelian  groups (in the few relators model) have full rank presentations asymptotically almost surely. Therefore, all the properties described above for metabelian groups given by full rank presentations are generic in the class of finitely presented  metabelian groups. We refer to Section \ref{se:random} for precise statements of the results.

\section{Structural properties of  metabelian groups given by full-rank presentations}

Throughout the paper we use the following notation.

Let $G$ be a group. By $\gamma_i(G)$ we denote the $i$-th term of the lower central series of $G$, that is $\gamma_1(G)=G$ and $\gamma_{i+1}(G)=[G, \gamma_{i}(G)]$ for all $i\geq 1$.  By $G^{(i)}$ we denote the $i$-th term of the derived series of $G$, that is $G^{(0)}=G$ and $G^{(i+1)}=[G^{(i)}, G^{(i)}]$.

We denote by  $\mc{N}_c$ and $\mc{M}$ the families of nilpotent groups of nilpotency class at most $c$,  and of metabelian groups, respectively. In general, we refer to books \cite{LS,Segal} for the standard facts and notation in group theory and to \cite{Neumann} for basic notions regarding varieties.

\subsection{A variation of the Generalized Freiheissatz} \label{se:Freiheissatz}

In this section we obtain some structural results on full rank metabelian groups, in particular, Theorem \ref{t: main_structure_theorem}.   

To prove this theorem we need some results on presentations in Smith normal form. 

Recall (see, for example, \cite{sims}), that an integer matrix $A = (a_{ij})$ is in \emph{Smith normal form} if there is some integer $r \geq 0$ such that the entries $d_i = a_{ii}, 1\leq i \leq r$ are positive, $A$ has no other nonzero entries, and $d_i$ divides $d_{i+1}$ for $1\leq i<r$.

\begin{dfn}
A finite presentation $\langle A\mid R\rangle$  is said to be in \emph{Smith normal form} if the relation matrix $M(A, R)$ is in Smith normal form.
\end{dfn}

\begin{prop}\label{p: smith_normal_form}
For any   finite  presentation $\langle A \mid R \rangle$ there exists a finite presentation in Smith normal form  $\langle A' \mid R' \rangle$, with $|A| = |A'|$, $|R| = |R'|$,  and $rank(M(A,R))=rank(M(A',R'))$,  such that for any variety  $\mc{V}$ the groups $G = \langle A \mid R \rangle_{\mc{V}}$ and $G' = \langle A' \mid R' \rangle_{\mc{V}}$ are isomorphic. Moreover, such a presentation $\langle A' \mid R' \rangle$ and an isomorphism $G \to G'$ can be found algorithmically. 
\end{prop}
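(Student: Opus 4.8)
The plan is to realize the Smith normal form reduction as a sequence of elementary operations, each of which we recognize as a Tietze-type transformation that does not change the isomorphism type of the group in any variety $\mc{V}$. Recall that the Smith normal form of an integer matrix $M = M(A,R)$ is obtained from $M$ by a finite sequence of the following moves: (i) swapping two rows, (ii) swapping two columns, (iii) adding an integer multiple of one row to another, (iv) adding an integer multiple of one column to another, and (v) negating a row or column. Crucially, the rank is preserved by all of these, and the number of rows and columns is unchanged, so we will automatically get $|A|=|A'|$, $|R|=|R'|$ and $rank(M(A,R))=rank(M(A',R'))$. The algorithm producing the Smith normal form (and hence the explicit sequence of moves and the transforming matrices $U,V$ with $UMV$ in Smith form) is classical and effective; see \cite{sims}. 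So it suffices to exhibit, for each of the five moves, a corresponding change of presentation $\langle A \mid R\rangle \rightsquigarrow \langle A'' \mid R''\rangle$ which (a) realizes that move on the relation matrix, and (b) yields isomorphic groups $\langle A \mid R\rangle_{\mc{V}} \cong \langle A'' \mid R''\rangle_{\mc{V}}$ for every variety $\mc{V}$, via an explicitly computable isomorphism.

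First I would handle the row operations, which act only on the relators. Swapping rows $i,j$ corresponds to reordering the list $R$, and negating row $i$ corresponds to replacing $r_i$ by $r_i^{-1}$; both obviously leave the normal closure $\langle\langle R\rangle\rangle$ unchanged, hence give literally the same group. Adding $k$ times row $j$ to row $i$ corresponds to replacing $r_i$ by a word $r_i'$ whose exponent-sum vector is the required sum — for instance one may take $r_i' = r_i \cdot (w\, r_j\, w^{-1})$-type products, but the cleanest choice is $r_i' := r_i\, r_j^{k}$, whose exponent sums are exactly the $i$-th plus $k$ times the $j$-th row; since $r_j \in \langle\langle R\rangle\rangle$, we have $\langle\langle r_1,\dots,r_i',\dots,r_m\rangle\rangle = \langle\langle r_1,\dots,r_m\rangle\rangle$, so again the group is unchanged on the nose. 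Next I would handle the column operations, which are the genuinely substantive ones because they amount to a change of generating set. Swapping columns $j,k$ is the relabeling $a_j \leftrightarrow a_k$, and negating column $j$ is the substitution $a_j \mapsto a_j^{-1}$; both are visibly automorphisms of the free object $F_{\mc{V}}(A)$ permuting/inverting the basis, hence descend to isomorphisms of the quotients, and they effect the claimed column move on $M$. The key move is column operation (iv): adding $k$ times column $j$ to column $\ell$ (with $j \ne \ell$). I would realize this by the substitution sending $a_\ell \mapsto a_\ell$ unchanged in the new presentation but defining the new alphabet via $a_j^{\mathrm{new}} = a_j$ for all indices except that we perform the Nielsen-type change $a_\ell \mapsto a_\ell a_j^{-k}$ on the generators — concretely, let $\psi\colon F_{\mc{V}}(A) \to F_{\mc{V}}(A)$ be the automorphism fixing every $a_i$ for $i \ne \ell$ and sending $a_\ell \mapsto a_\ell a_j^{k}$; this is an automorphism of $F_{\mc{V}}(A)$ because it is a basis-to-basis map, and replacing each $r_i$ by $\psi^{-1}(r_i)$ (equivalently, keeping $R$ but declaring the new basis) changes the exponent-sum matrix precisely by the column operation, since the abelianized action of $\psi$ on $\mbb{Z}^n$ is the corresponding elementary column matrix. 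All of these substitutions and their inverses are computable from the integer $k$ and the indices, so composing them along the classical Smith-form algorithm yields the desired $\langle A' \mid R'\rangle$ together with an explicit isomorphism $G \to G'$.

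The main obstacle — really the only point requiring care — is bookkeeping the direction of the elementary matrices: an automorphism $\psi$ of $F_{\mc{V}}(A)$ that is \emph{right} multiplication by $a_j^{k}$ on the $\ell$-th basis element induces on the abelianization $\mbb{Z}^n$ (written as column vectors of exponent sums) a specific elementary matrix, and one must be careful whether replacing $R$ by $\psi(R)$ or by $\psi^{-1}(R)$ produces "add $k$ times column $j$ to column $\ell$" versus its inverse. Since the set of elementary column operations is closed under inversion (the inverse of adding $k\cdot\text{col}_j$ to $\text{col}_\ell$ is adding $-k\cdot\text{col}_j$ to $\text{col}_\ell$), this is harmless: whichever convention one fixes, every move needed by the Smith algorithm is available, so no generality is lost. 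Assembling the chain of isomorphisms in the order dictated by the algorithm, and noting each is effectively computable, completes the proof. $\qed$
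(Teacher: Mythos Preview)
Your proposal is correct and follows essentially the same approach as the paper: realize each elementary row/column operation in the Smith algorithm by a Nielsen transformation on the tuple of relators or generators, respectively, and compose. The paper's proof is a brief sketch of exactly this idea (citing \cite{sims} for the Smith algorithm and \cite{GMO} for details), while you have spelled out the individual moves and the harmless direction-bookkeeping explicitly.
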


\begin{proof}
The presentation $\langle A' \mid R' \rangle$ can be obtained by repeatedly applying Nielsen transformations on the tuples  $A$ and $R$. Indeed, note that such a Nielsen transformation has the effect in the relation matrix $M(A,R)$  of adding or subtracting two columns or two rows, respectively. It is known (see \cite{sims})  that one can find a finite sequence of elementary row and column operations that transform $M(A,R)$ into its Smith normal form. Performing the corresponding Nielsen transformation on $\langle A \mid R \rangle$ one gets the required presentation $\langle A' \mid R' \rangle$.  The details of this procedure can be found in \cite{GMO}. %
\end{proof}

The complexity of finding a presentation in Smith normal form and other algorithmic considerations regarding full-rank presentations and presentations in Smith normal form will be studied in  upcoming work.

\begin{lemma}\label{t: structure_Smith_normal_form}
Let $G$ be a metabelian group given by a full-rank presentation in Smith normal form: 
$$
G=\langle A \mid R\rangle_\mc{M} = \langle a_1,\dots, a_n \mid a_1^{\alpha_1}=c_1, \dots, a_{m}^{\alpha_m}=c_m \rangle_\mc{M},
$$
where $c_i\in [F(A),F(A)]$, $\alpha_i \in \mbb{Z}\smallsetminus\{0\}$ for all $i=1, \dots, m$, and $m \leq n$.  Then the following holds:
\begin{itemize}
\item [1.]  $K= \langle a_1, \ldots a_m\rangle$ is a virtually abelian group, and its normal closure $L = K^G$ in $G$ is again virtually abelian;
\item  [2.] $H = \langle a_{m+1}, \dots, a_n \rangle$ is a free metabelian group of rank $n-m$;
\item [3.] $G = \langle H,K \rangle = HL$.
\end{itemize}

\end{lemma}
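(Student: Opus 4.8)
The plan is to analyze the structure of $G$ via the short exact sequence relating $G$ to its abelianization. Write $F = F_{\mc{M}}(A)$ for the free metabelian group on $A$, and $N = \langle\langle R\rangle\rangle$ so that $G = F/N$. Since each relator has the form $a_i^{\alpha_i}c_i^{-1}$ with $c_i \in [F,F]$, the images of the relators in the abelianization $G^{\ab} = \mbb{Z}^n$ are exactly $\alpha_i e_i$ for $i = 1,\dots,m$; hence $G^{\ab} \cong (\mbb{Z}/\alpha_1\mbb{Z}) \oplus \cdots \oplus (\mbb{Z}/\alpha_m\mbb{Z}) \oplus \mbb{Z}^{n-m}$, a finitely generated abelian group whose free part has rank $n-m$ and is generated by the images of $a_{m+1},\dots,a_n$. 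First I would establish item 2: by Romanovskii's Generalized Freiheissatz (quoted after Theorem \ref{t: main_structure_theorem}), since the presentation has deficiency $d = n-m$, the subset $\{a_{m+1},\dots,a_n\}$ — being a natural candidate for the $d$-element free-generating subset, which in Smith normal form it is — generates a free metabelian subgroup $H$ of rank $n-m$; alternatively one checks directly that the retraction $F \to F(a_{m+1},\dots,a_n)$ killing $a_1,\dots,a_m$ sends $N$ into itself (each $a_i^{\alpha_i}c_i$ maps into the normal closure of the remaining relators after the $c_i$ term is handled), giving a retraction $G \to H$ that splits the inclusion, so $H$ is free metabelian of rank $n-m$.

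Next I would prove item 1, that $K = \langle a_1,\dots,a_m\rangle$ and its normal closure $L = K^G$ are virtually abelian. The key observation is that modulo $G'' = G^{(2)}$ nothing interesting happens — $G/G''= G$ since $G$ is metabelian — so I work with the metabelian structure directly: $G' = [G,G]$ is abelian, and it is a module over $\mbb{Z}[G^{\ab}]$. The crucial point is that each $a_i$ for $i \le m$ satisfies $a_i^{\alpha_i} \in G'$, so in $G/G'$ the subgroup $KG'/G'$ is the torsion part $(\mbb{Z}/\alpha_1) \oplus \cdots \oplus (\mbb{Z}/\alpha_m)$, which is finite. Therefore $K$ (and likewise $L$) is, modulo $G'$, finite, so $K \cap G'$ has finite index in $K$, and it suffices to show $K \cap G'$ — or rather $L \cap G'$ — is abelian, or virtually abelian. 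This is where the real content lies: $L \cap G' \le G'$ is an abelian group already, so in fact $L$ is \emph{virtually abelian} automatically once we know $L/(L\cap G')$ is finite, i.e. once we know $LG'/G'$ is finite in $G^{\ab}$. Since $LG'/G' = KG'/G'$ is the image of $K$ in $G^{\ab}$, which is contained in the torsion subgroup and hence finite, both $K$ and $L$ are virtually abelian. I would need to be slightly careful that $L$ is finitely generated (it is, since $G$ is finitely presented metabelian, hence $G'$ is a finitely generated $\mbb{Z}[G^{\ab}]$-module, and $L \cap G'$ is a submodule, so Noetherian — using that $\mbb{Z}[G^{\ab}]$ is Noetherian by Hall's theorem).

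Finally, item 3: $HL = G$ because $H$ surjects onto the free part $\mbb{Z}^{n-m}$ of $G^{\ab}$ and $L$ contains (the preimage structure covering) the torsion part together with all of $G'$ — more precisely, $HL \supseteq HG'$ maps onto all of $G^{\ab}$ since $H$ covers the $a_{m+1},\dots,a_n$ directions and $L$ covers $a_1,\dots,a_m$ directions, while $HL \supseteq G' = [G,G]$ because $G' \le L$ (indeed $G'$ is generated by commutators, and I would check $G' \le K^G$ using that $G/K^G$ is abelian — this holds precisely when $G/K^G$ is generated by commuting images of the $a_j$, which is the case since $K^G$ contains $a_1,\dots,a_m$ and all commutators among the remaining generators lie in... hmm, this needs $[a_i,a_j] \in K^G$). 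The cleanest route: $G/L$ is generated by the images of $a_{m+1},\dots,a_n$, and I claim it is abelian; granting that, $G' \le L$, and then $G = HL$ follows since $G/L$ is a quotient of $H$'s image. To see $G/L$ is abelian, note $G/L \cong H/(H\cap L)$ is a quotient of the free metabelian group $H$, and one shows the relators force commutativity — actually $G/L$ has presentation $\langle a_{m+1},\dots,a_n \mid \text{(images of relators)}\rangle_{\mc{M}}$, but each relator $a_i^{\alpha_i}c_i$ for $i \le m$ becomes trivial (as $a_i \in L$) and $c_i$, a word in commutators, also dies; so $G/L$ is free metabelian of rank $n-m$, which is \emph{not} abelian unless $n-m \le 1$ — so this claim is false as stated and $G' \not\le L$ in general. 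The correct statement is $G = HL$ with $H \cap L$ possibly nontrivial; I would instead argue $G = \langle H, K\rangle$ trivially (the $a_j$ all appear), and $\langle H, K\rangle = HL$ because $L = K^G$ is normal, so $HL$ is a subgroup containing all generators.

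The main obstacle I anticipate is item 1 — controlling the normal closure $L = K^G$ and showing it is virtually abelian rather than merely that $K$ is. This requires understanding $L \cap G'$ as a $\mbb{Z}[G^{\ab}]$-submodule of $G'$ and using Noetherianity (Hall's theorem that $\mbb{Z}[\mbb{Z}^k]$ and more generally group rings of polycyclic groups are Noetherian), together with the structural fact that $L/(L \cap G')$ embeds in the torsion subgroup of $G^{\ab}$ and is therefore finite — the interplay of "abelian normal subgroup $G'$" plus "finite quotient" giving "virtually abelian" is the heart of it, and the bookkeeping with $G = HL$ versus $G' \le L$ needs care since the latter genuinely fails.
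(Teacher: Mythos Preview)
Your treatment of items 1 and 3 is essentially the paper's argument: $K\cap G'$ and $L\cap G'$ are abelian (as subgroups of $G'$), and the quotients $K/(K\cap G')$, $L/(L\cap G')$ embed in the torsion part of $G^{\ab}$, hence are finite. Your digression into Noetherianity to get $L$ finitely generated is unnecessary and in fact misleading --- the paper explicitly notes $L$ need not be finitely generated; what matters is only that $L/(L\cap G')\cong LG'/G' = KG'/G'$ is finitely generated abelian of bounded exponent $\alpha_1\cdots\alpha_m$, hence finite. For item 3 you meander but eventually land on the right one-liner: $L$ is normal and $HL$ contains every $a_i$.

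The genuine gap is item 2. Romanovskii's Freiheissatz gives you \emph{some} $d$-element subset $A_1\subseteq A$ freely generating a free metabelian subgroup, but it does not hand you $A_1=\{a_{m+1},\dots,a_n\}$; calling this the ``natural candidate, which in Smith normal form it is'' is exactly the assertion to be proved. Your alternative retraction argument fails outright: the map $F\to F(a_{m+1},\dots,a_n)$ killing $a_1,\dots,a_m$ does \emph{not} kill the relators, since $\phi(a_i^{\alpha_i}c_i^{-1})=\phi(c_i)^{-1}$ and $c_i\in [F,F]$ may well involve the surviving generators (take $c_1=[a_{m+1},a_{m+2}]$). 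So neither route you sketch closes the argument.

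The paper's fix is short. For $n-m\ge 2$: if some $a_i$ with $i\le m$ lay in Romanovskii's $A_1$, then $a_i^{\alpha_i}\in G'$, and since $|A_1|\ge 2$ there is some other $a_j\in A_1$; but then $[a_i^{\alpha_i},[a_i,a_j]]=1$ because both factors lie in the abelian group $G'$, contradicting that $A_1$ freely generates a free metabelian group. Hence $A_1=\{a_{m+1},\dots,a_n\}$. The case $n-m=1$ is handled separately by the obvious surjection $G\to\mbb{Z}$ sending $a_n\mapsto 1$ and $a_i\mapsto 0$ for $i<n$, showing $\langle a_n\rangle$ is infinite cyclic.
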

\begin{proof}
We show first that  $K = \langle a_1, \ldots a_m\rangle$ is virtually abelian. 
Note that $K\cap G'$ is abelian. Set $N=\alpha_1\cdots\alpha_m$. Then for all $g\in K$ one has  $g^N\in K \cap G'$. Hence  $K/K \cap G'$ is finite, so $K$ is virtually abelian. Similarly, for $L = K^G$ the subgroup $L \cap G'$ is normal in $L$ and abelian. The quotient $L/L \cap G'$ is abelian, of period $N$, and finitely generated (since $L = K(L\cap G'))$, hence finite. It follows that $L$ is abelian-by-(finite abelian). Note that $L$ might not be finitely generated. 

Now we show that $\langle a_{m+1}, \dots, a_n \rangle$ is a free metabelian group of rank $n-m$. Assume that $n-m\geq 2$. By Romanovski's aforementioned result \cite{Romanovskii} (see the discussion after Theorem \ref{t: main_structure_theorem}), we know that there exists a subset $A_1\subseteq A$ with $|A_1| = |A| - |R|$ such that $\langle A_1 \rangle$ is free metabelian freely generated by $A_1$. We claim that $A_1 = \{a_{m+1}, \dots, a_n\}$. Indeed, otherwise there exists $a_i \in A_1$ such that  $a_i^t \in G'$ for some $t\in \mbb{Z}\smallsetminus\{0\}$. Note that $|A_1| \geq 2$, so there is $a_j \in A_1$ with $i \neq j$.  It follows then that  $[a_i^t,[a_i,a_j]]=1$, a contradiction with the fact that $A_1$ freely generates $\langle A_1 \rangle$ as a free metabelian group. 

If $n-m = 1$ then the map $a_n\to 1$ and $a_i\to 0$ for all $i=1,\dots, n-1$ gives rise to a  homomorphism $G\to \mathbb{Z}$. 
Hence, $a_n$ has infinite order in $G$,  and $\langle a_n \rangle$ is a free metabelian group of rank $1$. We note in passing, that in the case when $m=n-1$ the element of $A$ that generates an infinite cyclic group (i.e.,  a free metabelian group of rank $1$) is not necessarily unique, e.g.,  in $BS(1,n)\cong \langle a_1,a_2\mid a_1^{n-1}=[a_1,a_2]\rangle$, both $a_1$ and $a_2$ generate an infinite cyclic subgroup.
\end{proof}

\begin{proof}[Proof of Theorem \ref{t: main_structure_theorem}]
	Let $G$ be a metabelian group given by a full-rank presentation  $G=\langle A\mid R\rangle_\mc{M}$.  By Proposition \ref{p: smith_normal_form} one can find algorithmically another presentation $G  \langle A'\mid R'\rangle_\mc{M} = \langle a_1',\dots, a_n' \mid r_1', \ldots, r_m' \rangle_\mc{M}$ of $G$ which is in Smith normal form. 
	
	If $m \leq n$ then Lemma \ref{t: structure_Smith_normal_form} applied to the presentation $\langle A', \mid R'\rangle$ gives  subgroups $H$ and $K$ with the required properties. From these we obtain  the required subgroups in $G$ by inverting the isomorphism $\langle A\mid R\rangle =\langle A'\mid R\rangle$. 
	
	On the other hand, if $m > n$, then $G$ is a quotient of the full-rank metabelian group with zero deficiency $G_1= \langle a_1', \ldots,a_n' \mid r_1', \ldots,r_n'\rangle$ (since $M(A',R')$ is in Smith normal form, this is a full-rank presentation), and by the previous case it follows that $G_1$ is virtually abelian. Since $G$ is a quotient of $G_1$, $G$ is also virtually abelian.
\end{proof}

\begin{remark}
\label{rem:2} 
Note, that if the presentation in Lemma \ref{t: structure_Smith_normal_form} is in  Smith normal form, but not of full rank, then by the Generalized Freiheissatz \cite{Romanovskii} the free subgroup $H = \langle A_0\rangle$ of $G$ exists, but there might not exists corresponding  subgroups $K$ and $L$ for $H$.   Indeed, let
$$
    G = \langle a_1, a_2, a_3, a_4 \mid [a_1,a_3] = 1, [a_2,a_4] = 1\rangle.
$$
Then $a_3,a_4$ generate a free metabelian group of rank 2, but $G/H^G$ is free metabelian of rank 2, so there is no a virtually abelian subgroup $L$, such that $G = HL$.
\end{remark}

\begin{remark}
Finally, note that  Theorem \ref{t: main_structure_theorem} or Lemma \ref{t: structure_Smith_normal_form}  may  not reveal fully the structure   of $G$ and how it is related to the subgroups $H$ and $K$. 
For example, consider 
$$
G=\langle a_1,a_2,a_3,a_4\mid a_1^m=[a_1,a_3],a_2^k=[a_2,a_4]\rangle.
$$
Lemma \ref{t: structure_Smith_normal_form}  tells us that $\langle a_3,a_4\rangle$ is freely generated by $a_3,a_4$ and $\langle a_1,a_2\rangle$ is a virtually abelian group. On the other hand, it is easy to check  that $\langle a_1,a_3\rangle\cong BS(1,m+1)$, $\langle a_2,a_4\rangle\cong BS(1,k+1)$, and $G$ is  the free metabelian product of two Baumslag-Solitar groups, which is not clear directly from the decomposition $G = HL$.
\end{remark}

\subsection{Direct decompositions}

In this section we  prove our main result on direct decomposition of metabelian groups given by full rank presentations, namely Theorem \ref{t: Full-rank_direct_decomposition}.

\begin{proof}[Proof of Theorem \ref{t: Full-rank_direct_decomposition}]
We showed in \cite{GMO} that if  $H$ is  a finitely generated nilpotent group of class $c \geq 2$ given by a finite full rank presentation $H = \langle A \mid R \rangle_{\mc{N}_c}$ with $|R| \leq |A|-1$,  then  in any direct decomposition of $H$ all, but one, direct factors are finite. We will use this fact in our proof. Let now $G$ be a finitely generated metabelian group given by a full-rank presentation $G=\langle A\mid R \rangle_{\mc{M}}$ such that $|R|\leq |A|-1$. Assume $G=G_1\times \dots \times G_k$ for some $k\geq 2$ and some subgroups $G_i$, $i=1, \dots, k$. Let $\pi$ be the natural projection of $G$ onto $H = G/\gamma_3(G)$. The quotient $H$ admits the full-rank presentation $\langle A\mid R\rangle_{\mc{N}_2}$, so by the result mentioned above  all, but one, say $\pi(G_k)$, of the groups $\pi(G_1), \dots, \pi(G_k)$ are finite. Hence $\ker\pi \cap G_i$ has finite index in $G_i$ for all $i=1,\dots, k-1$. However  $\ker\pi$ is abelian since $\ker\pi = \gamma_3(G) \leq [G,G]$. 
\end{proof}

\section{Diophantine problem}

\subsection{Diophantine problem and e-interpretability}
\label{se:e-interpretability}

In this section we introduce the technique  of interpretability by systems of equations.  It is nothing else than the classical model-theoretic technique of interpretability (see \cite{Marker, Hodges}), restricted to  systems of equations (equivalently, positive existential formulas without disjunctions). In \cite{ GMO1, GMO3} we used this technique to  study the Diophantine problem in different classes of solvable groups and rings.

\medskip

In what follows we often  use non-cursive boldface letters to denote tuples of elements: e.g.\ $\mathbf{a}=(a_1, \dots, a_n)$. Furthermore, we always assume that equations may contain constants from  the algebraic structure in which they are considered.

\begin{dfn}\label{d: e-def} 
 A set $D \subset M^m$ is called \emph{definable by systems of equations} in $\mathcal{M}$, or \emph{e-definable} in $\mathcal{M}$, if there exists a finite system of equations, say $\Sigma_{D}(x_1,\ldots,x_m, y_1, \dots, y_k)$, in the language of $\mc{M}$  such that 
for any tuple $\mathbf{a}\in M^m$, one has that  $\mathbf{a} \in D$ if and only if  the system  $\Sigma_{D}(\mathbf{a}, \mb{y})$ on variables $\mathbf {y}$ has a solution in $\mathcal{M}$. In this case $\Sigma_D$ is said to \emph{e-define} $D$ in $\mc{M}$.
\end{dfn}
\begin{remark}
Observe that, in the notation above, if $D \subset M^m$ is e-definable then it is definable in $\mathcal{M}$ by the formula $\exists \mb{y}  \Sigma_{D}(\mathbf{x}, \mb{y})$. Such formulas are called \emph{positive primitive}, or pp-formulas. Hence, e-definable subsets are sometimes called pp-definable. On the other hand, in number theory  such sets  are usually referred to as Diophantine ones. And yet, in algebraic geometry they can be described as projections of algebraic sets.
\end{remark}

\begin{dfn}\label{d: e-int}
An algebraic structure $\mathcal{A}= \left(A; f, \dots, r, \dots, c, \dots \right)$  is called \emph{e-interpretable} in another algebraic structure $\mathcal{M}$ if there exists $n\in \mathbb{N}$, a subset $D \subseteq \mathcal{M}^n$ and an onto map  (called the \emph{interpreting} map)
$
\phi: D \twoheadrightarrow \mathcal{A},
$ 
such that:
\begin{enumerate}
\item $D$ is e-definable in $\mathcal{M}$. 
\item  For every function $f=f(x_1, \dots, x_n)$ in the language of $\mathcal{A}$, the preimage by $\phi$ of the graph of $f$, i.e.\ the set $\{(x_1, \dots, x_k, x_{k+1}) \mid \phi(x_{k+1}) = f(x_1, \dots, x_k)\}$, is e-definable in $\mathcal{M}$. 
\item For every relation $r$ in the language of $\mathcal{A}$, and also for  the equality relation $=$ in $\mathcal{A}$,  the preimage by $\phi$ of the graph of $r$ is e-definable in $\mathcal{M}$.
\end{enumerate}
\end{dfn}

The following is a fundamental property of e-interpretability. Intuitively it states that if $\mc{A}$ is e-interpretable in $\mc{M}$, then any system of equations in $\mc{A}$ can be `encoded' as a system of equations in $\mc{M}$.
\begin{lemma}
\label{RedLemma}
Let  $\mathcal{A}$ be e-interpretable in $\mathcal{M}$ with  an  interpreting map  $\phi: D \twoheadrightarrow \mathcal{A}$ (in the notation of the Definition \ref{d: e-int}). Then for every finite system of equations $S(\mathbf{x})$ in $\mathcal{A}$, there exists   a finite system of equations $S^*(\mathbf{y}, \mathbf{z})$ in $\mathcal{M}$, such that if $(\mathbf{b}, \mathbf{c})$ is a solution to $S^*(\mathbf{y}, \mathbf{z})$ in $\mathcal{M}$, then $\mathbf{b}\in D$ and $\phi(\mathbf{b})$ is a solution to $S(\mathbf{x})$ in $\mathcal{A}$. Moreover, any solution $\mathbf{a}$ to $S(\mathbf{x})$ in $\mathcal{A}$ arises in this way, i.e.\  $\mathbf{a} = \phi(\mathbf{b})$ for some solution $(\mathbf{b}, \mathbf{c})$ to $S^*(\mathbf{y}, \mathbf{z})$ in $\mathcal{M}$, for some $i=1,\dots, k$. Furthermore, there is a polynomial time algorithm that constructs the system $S^*(\mathbf{y}, \mathbf{z})$ when given a system $S(\mathbf{x})$. 
\end{lemma}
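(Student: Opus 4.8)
The statement of Lemma~\ref{RedLemma} is the standard ``reduction lemma'' for e-interpretability, and the plan is to prove it by unwinding Definition~\ref{d: e-int} and replacing, term by term, each atomic piece of a system of equations in $\mathcal{A}$ by the e-definable set in $\mathcal{M}$ that interprets it. First I would fix the interpreting map $\phi\colon D\twoheadrightarrow\mathcal{A}$, with $D\subseteq M^n$ e-defined by a system $\Sigma_D(\mb{u},\ldots)$, and recall that for every basic operation $f$ of $\mathcal{A}$ the preimage $\phi^{-1}(\mr{graph}(f))$ is e-defined by some system $\Sigma_f$, and likewise for the equality relation (and any other basic relations) there are systems $\Sigma_{=}$, $\Sigma_r$. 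The key observation is that these finitely many systems suffice to encode \emph{any} equation over $\mathcal{A}$: an equation $t_1(\mb{x})=t_2(\mb{x})$ in the language of $\mathcal{A}$ is built from finitely many applications of the basic operations, so by introducing a fresh $n$-tuple of variables in $\mathcal{M}$ for every subterm occurring in $t_1$ and $t_2$ and for every original variable $x_i$, one can express ``this $n$-tuple represents (under $\phi$) the value of this subterm'' using a copy of the appropriate $\Sigma_f$, and finally ``the two sides are equal in $\mathcal{A}$'' using a copy of $\Sigma_{=}$.

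\textbf{Key steps, in order.} (1) Given a system $S(\mb{x})$ over $\mathcal{A}$ with variables $x_1,\dots,x_s$, assign to each $x_i$ a block $\mb{y}_i$ of $n$ fresh variables over $\mathcal{M}$, and impose $\Sigma_D(\mb{y}_i)$ for each $i$, forcing each block into $D$. (2) For each equation $t=t'$ in $S$, decompose $t$ and $t'$ into their subterm trees; walking from the leaves (which are the variables $x_i$, already handled, or constants $c$ of $\mathcal{A}$, handled by noting that $\phi^{-1}(c)$ is e-definable as a special case of clause~2 or directly since a constant is a $0$-ary operation) up to the roots, introduce for each internal node a fresh $n$-block $\mb{y}_{\text{node}}$ and impose the corresponding $\Sigma_{f}(\mb{y}_{\text{child}_1},\dots,\mb{y}_{\text{child}_k},\mb{y}_{\text{node}})$ expressing that $\phi(\mb{y}_{\text{node}})=f(\phi(\mb{y}_{\text{child}_1}),\dots)$. (3) Add $\Sigma_{=}(\mb{y}_{\text{root}(t)},\mb{y}_{\text{root}(t')})$ for each equation, and, if $S$ contains relational atoms, add the corresponding $\Sigma_r$. (4) Let $S^*(\mb{y},\mb{z})$ be the union of all these systems, where $\mb{y}$ collects the blocks $\mb{y}_1,\dots,\mb{y}_s$ associated to the original variables and $\mb{z}$ collects all auxiliary blocks and the auxiliary variables occurring inside the various $\Sigma$'s. (5) Soundness: if $(\mb{b},\mb{c})$ solves $S^*$ in $\mathcal{M}$, then each $\mb{b}_i\in D$, so $\phi(\mb{b}_i)$ is defined; an induction on subterm depth, using the defining property of each $\Sigma_f$, shows $\phi(\mb{y}_{\text{node}})$ evaluated at $\mb{c}$ equals the value of that subterm at $\phi(\mb{b})$, and then $\Sigma_{=}$ forces $\phi(\mb{b})$ to satisfy every equation of $S$. (6) Completeness: given a solution $\mb{a}$ of $S$ in $\mathcal{A}$, choose $\mb{b}_i\in\phi^{-1}(a_i)$ (possible since $\phi$ is onto), set every auxiliary block to a $\phi$-preimage of the corresponding subterm value, and use that each $\Sigma_f$, $\Sigma_{=}$, $\Sigma_D$ is satisfiable on exactly those preimages (by definition of e-definability) to pick the remaining auxiliary variables $\mb{c}$; this yields a solution of $S^*$ with $\phi(\mb{b})=\mb{a}$. (7) Complexity: the construction replaces each symbol of $S$ by a bounded amount of data (the fixed systems $\Sigma_D,\Sigma_f,\Sigma_r,\Sigma_{=}$ depend only on the interpretation, not on $S$), and the number of subterms is linear in the size of $S$, so $S^*$ is produced in polynomial time.

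\textbf{Main obstacle.} None of the individual steps is deep; the care needed is bookkeeping, namely making sure the variable blocks are kept disjoint (rename the auxiliary variables inside each inserted copy of $\Sigma_f$, $\Sigma_D$, etc., so that distinct insertions do not accidentally share variables) and that the inductive evaluation argument in the soundness direction is set up cleanly, since that is where the defining properties of the $\Sigma_f$ are actually used. A secondary point to state precisely is how constants of $\mathcal{A}$ appearing in $S$ are handled: treat a constant symbol $c$ as a nullary operation, so clause~2 of Definition~\ref{d: e-int} gives an e-definable set $\phi^{-1}(\{c\})\subseteq M^n$, and attach a fresh block constrained to lie in it. With this in place the proof is a routine structural induction, and I would present it at that level of detail rather than spelling out every substitution.
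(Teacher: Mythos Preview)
Your proposal is correct and is essentially the same approach as the paper's: the paper's proof simply refers to the standard interpretation-transfer argument (Theorem~5.3.2 in Hodges) and notes that one replaces first-order formulas by systems of equations throughout, which is exactly the term-by-term unwinding you describe. You have just written out in detail what the paper leaves as a one-line citation.
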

\begin{proof}
It suffices to follow step by step the proof of Theorem 5.3.2 from \cite{Hodges}, which states that an analogue of the above holds when $\mc{A}$ is interpretable by first order formulas in $\mc{M}$. One needs to replace all first order formulas by systems of equations.
\end{proof}
 Now we state two key consequences of Lemma \ref{RedLemma}. 
\begin{cor}\label{RedCor}
If $\mathcal{A}$ is e-interpretable in $\mathcal{M}$, then $\mc{D}(\mc{A})$ is reducible to $\mc{D}(\mc{M})$. Consequently, if $\mc{D}(\mathcal{A})$  is undecidable, then $\mc{D}(\mathcal{M})$ is undecidable as well.
\end{cor}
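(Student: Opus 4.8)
The plan is to read the statement off directly from Lemma~\ref{RedLemma}, so the argument will be short. First I would unwind the definition of reducibility given in Section~\ref{se:e-interpretability}: to reduce $\mc{D}(\mc{A})$ to $\mc{D}(\mc{M})$ one must exhibit an algorithm that, on input a finite system of equations $S(\mathbf{x})$ over $\mc{A}$, outputs a finite system of equations over $\mc{M}$ which is solvable in $\mc{M}$ if and only if $S$ is solvable in $\mc{A}$. The system $S^*(\mathbf{y},\mathbf{z})$ furnished by Lemma~\ref{RedLemma} is precisely such an output: it is produced from $S$ by a (polynomial-time) algorithm, and the two clauses of that lemma supply the two directions of the biconditional — if $(\mathbf{b},\mathbf{c})$ solves $S^*$ in $\mc{M}$ then $\phi(\mathbf{b})$ solves $S$ in $\mc{A}$, and conversely every solution of $S$ in $\mc{A}$ is of the form $\phi(\mathbf{b})$ for some solution $(\mathbf{b},\mathbf{c})$ of $S^*$ in $\mc{M}$. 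In particular $S^*$ is solvable in $\mc{M}$ exactly when $S$ is solvable in $\mc{A}$, which is the required reduction; and since the transformation runs in polynomial time, it is in fact a Karp (polynomial-time many-one) reduction.

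For the second assertion I would argue by contraposition. Suppose $\mc{D}(\mc{M})$ is decidable, witnessed by an algorithm $\mathcal{B}$. Given an arbitrary finite system $S$ of equations over $\mc{A}$, first run the reduction to compute $S^*$, then apply $\mathcal{B}$ to $S^*$; by the equivalence just established, the output of $\mathcal{B}$ tells us whether or not $S$ has a solution in $\mc{A}$. Hence $\mc{D}(\mc{A})$ would be decidable, contradicting the hypothesis. Therefore $\mc{D}(\mc{M})$ must be undecidable.

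I do not anticipate a genuine obstacle here, since all of the substance is already packaged in Lemma~\ref{RedLemma}; the only points needing a word of care are that the reduction is total — it is defined on every input system, the construction in the lemma being uniform — and that composing it with a hypothetical decision procedure for $\mc{M}$ genuinely yields an algorithm for $\mc{A}$, both of which are routine.
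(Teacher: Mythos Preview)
Your proposal is correct and follows exactly the approach the paper intends: the paper gives no separate proof for this corollary, treating it as an immediate consequence of Lemma~\ref{RedLemma}, and your argument simply spells out that inference. Nothing more is needed.
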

\begin{cor}\label{transitivity}
e-interpetability is a transitive relation, i.e.,  if $\mc{A}_1$ is e-intepretable in $\mc{A}_2$, and $\mc{A}_2$ is e-interpretable in $\mc{A}_3$, then $\mc{A}_1$ is e-interpretable in $\mc{A}_3$.
\end{cor}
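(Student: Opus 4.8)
The plan is to compose the two given interpretations and check that the resulting map satisfies the three clauses of Definition \ref{d: e-int}. Write $\phi\colon D\twoheadrightarrow\mc{A}_1$ with $D\subseteq\mc{A}_2^{\,n}$ for the interpretation of $\mc{A}_1$ in $\mc{A}_2$, and $\psi\colon E\twoheadrightarrow\mc{A}_2$ with $E\subseteq\mc{A}_3^{\,k}$ for the interpretation of $\mc{A}_2$ in $\mc{A}_3$. For $p\geq 1$ let $\psi^{(p)}\colon E^{\,p}\to\mc{A}_2^{\,p}$ be the map applying $\psi$ to each of the $p$ blocks of $k$ coordinates. Set
$$
D' \;=\; \{\, \mb{c}\in E^{\,n}\;:\;\psi^{(n)}(\mb{c})\in D \,\}\ \subseteq\ \mc{A}_3^{\,nk},
\qquad
\Phi \;=\; \phi\circ\psi^{(n)}|_{D'}\colon D'\twoheadrightarrow\mc{A}_1 .
$$
Surjectivity of $\Phi$ is immediate: given $a\in\mc{A}_1$ pick $\mb{a}\in D$ with $\phi(\mb{a})=a$, then, since $\psi$ is onto, pick $\mb{c}\in E^{\,n}$ with $\psi^{(n)}(\mb{c})=\mb{a}$; then $\mb{c}\in D'$ and $\Phi(\mb{c})=a$. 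It remains to verify e-definability of $D'$ and of the $\Phi$-preimages of the graphs of the operations and relations (and of equality) of $\mc{A}_1$.

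The only technical tool needed is the following pull-back principle, which is essentially a restatement of Lemma \ref{RedLemma}: if $X\subseteq\mc{A}_2^{\,p}$ is e-definable in $\mc{A}_2$, then $\psi^{-(p)}(X):=\{\,\mb{c}\in E^{\,p}:\psi^{(p)}(\mb{c})\in X\,\}$ is e-definable in $\mc{A}_3$. To see this one repeats the argument of the proof of Lemma \ref{RedLemma} (i.e.\ the proof of Theorem~5.3.2 of \cite{Hodges}, with first-order formulas replaced by systems of equations): if $\Sigma_X(\mb{x},\mb{y})$ e-defines $X$ in $\mc{A}_2$, replace every variable of $\Sigma_X$ ranging over $\mc{A}_2$ by a fresh block of $k$ variables over $\mc{A}_3$ constrained to lie in $E$ (e-definable by clause~1 for $\psi$), rewrite each atomic equation $t=s$ of $\Sigma_X$ as a finite system over $\mc{A}_3$ by introducing auxiliary variables for the subterms and invoking the e-definable $\psi$-preimages of the graphs of the operations and constants of $\mc{A}_2$ (clause~2 for $\psi$), take the conjunction of all these systems, and existentially quantify the auxiliary variables and the block corresponding to $\mb{y}$.

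Granting the pull-back principle, the three clauses follow. Clause~1: $D$ is e-definable in $\mc{A}_2$, so $D'=\psi^{-(n)}(D)$ is e-definable in $\mc{A}_3$. Clauses~2 and 3: let $f$ be an $r$-ary operation of $\mc{A}_1$, let $Y_f\subseteq\mc{A}_2^{\,n(r+1)}$ be its $\phi$-preimage (e-definable in $\mc{A}_2$ by clause~2 for $\phi$), and observe that the $\Phi$-preimage of the graph of $f$ is exactly $(D')^{\,r+1}\cap\psi^{-(n(r+1))}(Y_f)$, since $\Phi=\phi\circ\psi^{(n)}$ blockwise; both sets are e-definable in $\mc{A}_3$ (the first as a conjunction of copies of the e-definition of $D'$, the second by the pull-back principle), hence so is their intersection. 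The identical argument applies to every relation of $\mc{A}_1$ and to the equality relation of $\mc{A}_1$. Therefore $\Phi\colon D'\twoheadrightarrow\mc{A}_1$ is an e-interpretation of $\mc{A}_1$ in $\mc{A}_3$, as required.

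The main obstacle is bookkeeping rather than substance: one must ensure that the domain constraint $E$ is imposed on \emph{every} $\mc{A}_2$-variable appearing in the substitution — not only on the displayed ones — and that the e-definition of $D'$ is conjoined for each block meant to range over the domain of $\Phi$; once the pull-back principle is isolated as above, these checks are routine. No complexity claim is being made in this corollary, so the verification need not track the polynomial-time bound of Lemma \ref{RedLemma}, although it is clearly preserved by the construction.
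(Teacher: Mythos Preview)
Your argument is correct and is precisely the approach the paper has in mind: the corollary is stated without proof, immediately after Lemma~\ref{RedLemma}, and your ``pull-back principle'' is exactly that lemma (replacing each $\mc{A}_2$-variable by a $k$-block constrained to lie in $E$ and rewriting atomic formulas via the e-definable graphs for $\psi$). Your write-up simply spells out the composition $\Phi=\phi\circ\psi^{(n)}$ and checks the three clauses of Definition~\ref{d: e-int}, which the paper leaves implicit; the only redundancy is that in clause~2 the intersection with $(D')^{\,r+1}$ is automatic since $Y_f\subseteq D^{\,r+1}$, but this does no harm.
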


The following is a key  property of e-interpretability that is used below. 
\begin{prop}[\cite{GMO1}]\label{extra_prop1}
Let $H$ be a normal subgroup of a group $G$. If  $H$ is e-definable in $G$ (as a set) then the natural map  $\pi: G \to G/H$ is an e-interpretation of $G/H$ in $G$.   Consequently, $\mc{D}(G/H)$ is reducible to $\mc{D}(G)$. 
\end{prop}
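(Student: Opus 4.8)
The plan is to verify directly the three conditions of Definition \ref{d: e-int} for the surjection $\phi = \pi\colon G\twoheadrightarrow G/H$, taking the interpreting domain to be $D = G\subseteq G^1$ itself. Fix, by hypothesis, a finite system of equations $\Sigma_H(w,\mathbf{u})$ that e-defines $H$ in $G$, so that for every $g\in G$ one has $g\in H$ if and only if $\Sigma_H(g,\mathbf{u})$ has a solution in $G$. The crucial (and essentially only) observation is that substituting a group word in the coordinate variables into the first argument of $\Sigma_H$ again produces a finite system of equations in the language of groups; hence every statement of the form ``$w(\mathbf{x})\in H$'', for a word $w$, is e-definable in $G$.

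First, $D = G$ is e-definable in $G$ by the single equation $x = x$, which gives condition (1). For condition (2) we run through the group operations of $G/H$. The preimage under $\pi$ of the graph of multiplication is
$$\{(x,y,z)\in G^3 : \pi(x)\pi(y)=\pi(z)\} = \{(x,y,z) : z^{-1}xy\in H\},$$
which is e-defined by $\Sigma_H(z^{-1}xy,\mathbf{u})$ on variables $\mathbf{u}$; the preimage of the graph of inversion is $\{(x,y) : xy\in H\}$, e-defined by $\Sigma_H(xy,\mathbf{u})$; and the preimage of the identity constant $1_{G/H}$ is $\{x : x\in H\} = H$, e-defined by $\Sigma_H(x,\mathbf{u})$. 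If one regards each element $\pi(g)\in G/H$ as a named constant, its preimage $gH$ is likewise e-defined by $\Sigma_H(g^{-1}x,\mathbf{u})$, using $g$ as a constant of $G$. For condition (3), the only relation to treat is equality in $G/H$, whose preimage under $\pi$ is $\{(x,y) : x^{-1}y\in H\}$, e-defined by $\Sigma_H(x^{-1}y,\mathbf{u})$. This exhausts Definition \ref{d: e-int}, so $\pi$ is an e-interpretation of $G/H$ in $G$, and the reduction $\mc{D}(G/H)\to\mc{D}(G)$ follows at once from Corollary \ref{RedCor}.

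There is no genuine obstacle here beyond bookkeeping: the content reduces to the fact that any coset identity $\pi(w_1(\mathbf{x}))=\pi(w_2(\mathbf{x}))$ in $G/H$ is equivalent to the single membership condition $w_2(\mathbf{x})^{-1}w_1(\mathbf{x})\in H$, together with the stability of ``being a finite system of equations'' under substitution of group words. The one point worth an explicit remark — relevant when one later feeds this interpretation into Lemma \ref{RedLemma} — is that a system of equations over $G/H$ may carry constants from $G/H$; since $\pi$ is onto, every such constant lifts to a constant of $G$, so the construction of the pulled-back system is unobstructed and runs in polynomial time.
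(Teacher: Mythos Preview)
Your verification is correct and is exactly the standard argument: take $D=G$, and use the e-definition of $H$ to e-define each of the preimages required by Definition~\ref{d: e-int} via the single observation that $\pi(w_1(\mathbf{x}))=\pi(w_2(\mathbf{x}))$ is equivalent to $w_2(\mathbf{x})^{-1}w_1(\mathbf{x})\in H$. Note, however, that the paper does not actually supply a proof of this proposition --- it is quoted from \cite{GMO1} and stated without argument --- so there is nothing in the paper to compare against beyond observing that your write-up is the expected one.
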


\subsection{The Diophantine problem in metabelian groups given by full rank presentations}

We next discuss the Diophantine problem in  metabelian groups admitting a full-rank presentation. We will need the following result regarding the same problem in nilpotent groups:
\begin{thm}[\cite{GMO}]\label{t: Dioph_full_rank_nilpotent}
Let $G$ be a finitely generated nonabelian nilpotent group admitting a full rank presentation of deficiency at least $2$ (i.e.\ there are at least two more generators than relations). Then the ring of integers $\mbb{Z}$ is e-interpretable in $G$, and, in particular, the Diophantine problem of $G$ is undecidable.
\end{thm}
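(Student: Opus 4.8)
The plan is to e-interpret the ring $\mbb{Z}$ in $G$ and then to invoke Corollary \ref{RedCor} together with the undecidability of $\mc{D}(\mbb{Z})$ to conclude that $\mc{D}(G)$ is undecidable. I would first reduce to the case of nilpotency class $2$. The subgroup $\gamma_3(G)$ is a verbal subgroup of the finitely generated nilpotent group $G$, hence of finite width and e-definable in $G$, and it is normal; so by Proposition \ref{extra_prop1} the quotient $\bar G=G/\gamma_3(G)$ is e-interpretable in $G$, and by transitivity (Corollary \ref{transitivity}) it suffices to e-interpret $\mbb{Z}$ in $\bar G$. Now $\bar G$ is nilpotent of class exactly $2$: it is nonabelian because $\gamma_2(G)=\gamma_3(G)$ would force $\gamma_2(G)=1$ by nilpotency, contradicting the nonabelianness of $G$. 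Moreover $\bar G$ admits the full rank presentation $\langle A\mid R\rangle_{\mc N_2}$ of deficiency $d=|A|-|R|\geq 2$, so by the structural result of \cite{GMO} it is virtually free class-$2$ nilpotent and contains a free class-$2$ nilpotent subgroup $\bar H=\langle x_1,\dots,x_d\rangle$ of rank $d\geq 2$.

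Second, I would set up the arithmetic inside $\bar G$ using the commutator pairing. Fix the free generators $x_1,x_2$ of $\bar H$ and put $c=[x_1,x_2]$; since $\bar G$ has class $2$, the commutator is central and bilinear, and the key identity $[x_1^{a},x_2^{b}]=c^{ab}$ holds, with $c$ of infinite order. Centralizers are e-definable using the generators of $\bar G$ as constants, and in a free class-$2$ nilpotent group one computes $C_{\bar G}(x_1)=\langle x_1\rangle\,Z$ and $C_{\bar G}(x_2)=\langle x_2\rangle\,Z$, where $Z=Z(\bar G)$. I would take as domain the e-definable set $D=C_{\bar G}(x_1)$ with interpreting map $\phi(x_1^{a}z)=a$ for $z\in Z$, declaring two elements of $D$ to represent the same integer when they differ by a central element; this equality relation is e-definable because $Z$ is e-definable. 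Addition is then read off from the group multiplication on $D$ modulo $Z$, hence is e-definable.

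The crux is multiplication. Given $u=x_1^{a}z_1$ and $v=x_1^{b}z_2$ in $D$, I want to e-define that $w\in D$ represents $ab$. The idea is to transport the exponent $b$ from the $x_1$-direction into the $x_2$-direction by a commutator condition and then pair it with $u$: one asserts the existence of $v'\in C_{\bar G}(x_2)$ with $[v,x_2]=[x_1,v']$, which by the identity above forces the $x_2$-exponent of $v'$ to equal $b$, and then imposes $[w,x_2]=[u,v']$, which forces $w$ to represent $[x_1^{a},x_2^{b}]=c^{ab}$, i.e.\ $w\equiv x_1^{ab}$ modulo $Z$. Concretely the graph of multiplication is e-defined on $D^3$ by the system
\[
[v,x_2]=[x_1,v'],\qquad [x_2,v']=1,\qquad [w,x_2]=[u,v'],
\]
existentially quantified over $v'$; each conjunct is a commutator equation, hence an equation in $\bar G$.

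The main obstacle, and where I expect the real work to lie, is that the existential witnesses and the centralizers in the displayed system range over all of $\bar G$, which is only \emph{virtually} free class-$2$ nilpotent and may carry torsion and central directions beyond $\langle c\rangle$. I must ensure these do not corrupt the intended arithmetic, i.e.\ that $c^{ab}=c^{a'b'}$ still forces equality of the represented integers and that no stray element satisfies the system while violating the exponent relations. I would handle this by first factoring out the torsion subgroup (finite, characteristic and e-definable in a finitely generated nilpotent group), reducing to the torsion-free case, and then using that in a torsion-free class-$2$ nilpotent group the $[x_1,x_2]$-coordinate spans a free $\mbb{Z}$-direction, so the commutator equations pin down the exponents exactly as computed in $\bar H$; any additional central generators are annihilated by the ``modulo $Z$'' equality and are therefore harmless. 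Once the domain, equality, addition and multiplication are all e-definable, $\phi$ is an e-interpretation of $\mbb{Z}$ in $\bar G$, and the theorem follows.
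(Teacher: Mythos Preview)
The paper does not prove this theorem: it is quoted verbatim from \cite{GMO} and used as a black box in the proof of Theorem~\ref{t: Diophantine_Problem_full_rank}. So there is no proof here to compare your proposal against.

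That said, your overall strategy---pass to the class-$2$ quotient via Proposition~\ref{p: interp_verbal_elliptic}, then encode integer arithmetic through the bilinear commutator pairing $[x_1^a,x_2^b]=c^{ab}$---is the standard route and is essentially how the result is obtained in \cite{GMO,GMO1}. The reduction to $\bar G=G/\gamma_3(G)$ and the argument that $\bar G$ is genuinely nonabelian are correct.

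Where your sketch is incomplete is exactly where you flag it. You assert $C_{\bar G}(x_1)=\langle x_1\rangle Z$ and $C_{\bar G}(x_2)=\langle x_2\rangle Z$, but this is a statement about centralizers in the whole of $\bar G$, not in the free subgroup $\bar H$; it does not follow from $\bar H$ being free class-$2$ nilpotent, and quotienting by torsion does not repair it. Even in a torsion-free class-$2$ group that is only \emph{virtually} free nilpotent, an element outside $\langle x_1\rangle Z$ can centralize $x_1$ (for instance, other generators $a_i$ may commute with $x_1$ modulo the imposed relations). The actual argument in \cite{GMO} works instead with a carefully chosen quotient or with the explicit Smith-normal-form presentation, so that the relevant centralizers and the cyclic subgroup $\langle c\rangle$ are controlled globally, not just inside $\bar H$. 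Your system of commutator equations is the right shape, but making it sound requires either (i) showing that after passing to a suitable e-interpretable quotient the centralizers really are $\langle x_i\rangle Z$, or (ii) replacing the centralizer description of $D$ by an e-definition that pins down the $\langle x_1\rangle$-coset directly. Without one of these, the multiplication clause can be satisfied by spurious witnesses and the interpretation is not yet justified.
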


Next we recall the definition of (finite) verbal width. This notion  is conveniently related to definability by equations, as we see in  Proposition \ref{p: interp_verbal_elliptic}. 

Let $w=w(x_1, \dots, x_m)$ be a word on an alphabet of variables and its inverses $\{x_1, \dots, x_m\}^{\pm 1}$. The \emph{$w$-verbal subgroup} of a group $G$ is defined as $w(G) = \langle w(g_1,\ldots,g_m)\mid g_i\in G \rangle$, and  $G$ is said to have \emph{finite $w$-width} if there exists an integer $n$, such that every  $g \in w(G)$ can be expressed as a product of at most $n$ elements of the form $w(g_1,\ldots,g_m)^{\pm 1}$
. Hence, $w(G)$ is e-definable  in $G$ through the equation
$
x=\prod_{i=1}^n w(y_{i1}, \dots, y_{im}) w(z_{i1}, \dots, z_{im})^{-1}
$
on variables $x$ and $\{y_{ij}, z_{ij} \mid 1\leq i\leq n, \ 1\leq j \leq m\}$. 

\begin{prop}\label{p: interp_verbal_elliptic}
Let $G$ be a group and let $H$ be a  normal verbal subgroup of $G$ with finite verbal width. Then the quotient $G/H$ is e-interpretable in $G$.
\end{prop}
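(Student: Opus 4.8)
\textbf{Proof plan for Proposition \ref{p: interp_verbal_elliptic}.}

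The plan is to combine the two preceding facts: the remark that a verbal subgroup of finite verbal width is e-definable in $G$ via a single equation, and Proposition \ref{extra_prop1}, which upgrades e-definability of a normal subgroup to e-interpretability of the quotient. So the first step is to observe that since $H$ is verbal, there is a word $w = w(x_1, \dots, x_m)$ with $H = w(G)$; by hypothesis $G$ has finite $w$-width, say $n$, so every element of $H$ is a product of at most $n$ elements of the form $w(\mathbf{g})^{\pm 1}$. As spelled out in the discussion just before the proposition, this means $H$ is e-definable in $G$ as a set: the single equation
$$
x=\prod_{i=1}^n w(y_{i1}, \dots, y_{im})\, w(z_{i1}, \dots, z_{im})^{-1}
$$
on variables $x$ and $\{y_{ij}, z_{ij}\}$ has, for a fixed value $a$ of $x$, a solution in $G$ if and only if $a \in w(G) = H$ (the forward direction uses finite width together with the fact that $w(\mathbf{g})^{-1}$ is itself of the form $w$ of a possibly different tuple, or is absorbed into the $z$-variables; the backward direction is immediate since any such product lies in $w(G)$).

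The second step is simply to invoke Proposition \ref{extra_prop1} with this $H$: $H$ is normal in $G$ by hypothesis and e-definable in $G$ by the first step, so the natural projection $\pi \colon G \to G/H$ is an e-interpretation of $G/H$ in $G$. That is exactly the conclusion. One should also note in passing that e-interpretability here is witnessed by the trivial interpreting domain $D = G$ (or $G^1$), with $\phi = \pi$; condition (1) of Definition \ref{d: e-int} is the e-definability of $G$ in itself (trivial), condition (2) for the group operations pulls back to the group operations of $G$ composed with membership in $H$, and the equality relation of $G/H$ pulls back to $\{(x,y) : xy^{-1} \in H\}$, which is e-definable precisely because $H$ is. But all of this bookkeeping is already packaged inside Proposition \ref{extra_prop1}, so nothing further is needed.

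The only genuine content, and hence the only place to be careful, is the first step: making sure the finite-width hypothesis really does yield an \emph{equation} (a single finite system), not merely a countable union of conditions. The subtlety is that a priori membership in $w(G)$ is an infinite disjunction over word length / number of factors; finite width is exactly what collapses it to a bounded product, and one must also check that inverses of values of $w$ do not require a separate clause — this is handled by including both $w(y_{i\bullet})$ and $w(z_{i\bullet})^{-1}$ factors as above, which suffices since any bounded product of $w(\mathbf{g})^{\pm 1}$ can be rewritten in that alternating shape (padding with trivial tuples if fewer than $n$ factors are needed, using that $w$ evaluated at a tuple of $1$'s is $1$). Once that is in hand the proposition follows with no further work.
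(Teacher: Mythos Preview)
Your proposal is correct and follows exactly the paper's approach: the paper's proof is the two-line ``By the argument above the subgroup $H$ is e-definable in $G$. Now the result follows from Proposition \ref{extra_prop1},'' and your plan is precisely an expanded version of this, unpacking both the e-definability of $H$ via the displayed equation and the invocation of Proposition \ref{extra_prop1}. Your extra care about padding with trivial tuples and handling inverses is sound (and implicit in the paper's displayed formula), so nothing further is needed.
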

\begin{proof} 
By the argument above the subgroup $H$ is e-definable in $G$. Now the result follows from Proposition \ref{extra_prop1}.
\end{proof}

\begin{proof}[Proof of Theorem \ref{t: Diophantine_Problem_full_rank}]
Let $G=\langle A\mid R \rangle_{\mc{M}}$ be a full rank presentation of a metabelian group $G$. To prove 1) note first that since $\gamma_3(G) \geq [G',G']$  the 2-nilpotent quotient  $G/\gamma_3(G)$ admits a  presentation $\langle A\mid R\rangle_{\mc{N}_2}$ in  the variety $\mc{N}_2$ of nilpotent groups of class $\leq 2$. In particular, the group $G/\gamma_3(G)$ has a  full-rank presentation of deficiency at least 2.  By Theorem \ref{t: Dioph_full_rank_nilpotent}, the ring $\mbb{Z}$ is e-interpretable in $G/\gamma_3(G)$.   It is known that in a finitely generated metabelian group any verbal subgroup has finite width \cite{Segal10}, in particular, the subgroup $\gamma_3(G)$ has finite width in $G$.  Hence by Proposition \ref{p: interp_verbal_elliptic}  the group $G/\gamma_3(G)$ is e-interpretable in $G$. 
It follows from transitivity of e-interpretations that $\mbb{Z}$ is e-interpretable in $G$, so the Diophantine problem in $G$ is undecidable. 

To prove the second statement of the theorem observe that if $|R| \geq |A|$ then  by Theorem \ref{t: main_structure_theorem} the group $G$ is virtually abelian. Hence the Diophantine problem in $G$ is decidable (see  \cite{Ershov}).  
\end{proof}

\section{Random metabelian groups}
\label{se:random}

In this section we study random metabelian groups in the   few-relators model.
More precisely, we consider group presentations 
\begin{equation}\label{e: main_presentation2}
G=\langle a_1, \dots, a_n \mid r_1, \dots, r_m\rangle_{\mathcal{M}} = \langle A \mid R \rangle_{\mc{M}}
\end{equation}
in the variety of metabelian groups $\mc{M}$, where the set of generators $A = \{a_1, \ldots, a_n\}$ is fixed, the number of relations $m$ is also fixed,   and $R$ is a set of $m$ words of length $\ell$ in the alphabet $A^{\pm 1}$ chosen randomly and uniformly, as explained in Section \ref{s: few_relators_model}. We then study the asymptotic properties of $G$ as $\ell$ tends to infinity. 

As we mentioned in the introduction the key observation here is that due to Theorem \ref{t: random_walks}  a finite presentation in the variety of all groups (hence, any variety) has full rank asymptotically almost surely.

\begin{thm}
 Let $n, m\in \mbb{N}$, and let $G$ be a finitely generated metabelian group given by a presentation $\langle A\mid R\rangle_{\mc{M}}=\langle a_1, \dots, a_n \mid r_1, \dots, r_m \rangle_{\mc{M}}$, where all relators $r_i$ have length $\ell$. Then  the following holds asymptotically almost surely  as $\ell \to \infty$: There exist two finitely generated subgroups $H$ and $K$ of $G$ such that:
	\begin{itemize}
	    \item  [1.] $H$ is a free metabelian group of rank  $\max(|A| -|R|,0)$,
	    \item [2.]  $K$ is a virtually abelian group with  $|R|$ generators, and its normal closure $L = K^G$ in $G$ is again virtually abelian;
	    \item [3.] $G =  \langle H, K \rangle = LH$. 
	  	\end{itemize}
	 Moreover, in this case there is an algorithm that given a presentation $G=\langle A\mid R\rangle_\mc{M}$ finds a free basis for  the subgroup $H$ and a  generating set in $|R|$ generators for the subgroup $K$. 
\end{thm}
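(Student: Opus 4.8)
The plan is to deduce this theorem directly from the combination of Theorem~\ref{t: main_structure_theorem} (the structural result for full-rank presentations) and Theorem~\ref{t: random_walks} together with its Corollary (asymptotic almost-sure full rank in the few-relators model). The point is that the statement to be proved is nothing more than Theorem~\ref{t: main_structure_theorem} with the hypothesis ``$\langle A\mid R\rangle_{\mc M}$ has full rank'' replaced by ``the relators $r_i$ all have length $\ell$ and $\ell\to\infty$''. So the work is entirely in verifying that the full-rank hypothesis holds asymptotically almost surely.

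First I would recall the setup: for fixed $n=|A|$ and $m=|R|$, the relators $r_1,\dots,r_m$ are each chosen uniformly among words of length $\ell$ in $A^{\pm1}$, so each relator is a length-$\ell$ simple random walk on the free group (equivalently, on $\mathbb{Z}^n$ after abelianization). By Theorem~\ref{t: random_walks}, the relation matrix $M(A,R)$ has rank $\min\{n,m\}$ asymptotically almost surely as $\ell\to\infty$; equivalently, by the Corollary immediately following it, the presentation $\langle A\mid R\rangle$ has full rank a.a.s. Since the relation matrix $M(A,R)$ depends only on the exponent sums and not on the variety, the presentation $\langle A\mid R\rangle_{\mc M}$ is a full-rank presentation in $\mc M$ on the same event.

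Next, on that event, I would simply invoke Theorem~\ref{t: main_structure_theorem}: it produces finitely generated subgroups $H$ and $K$ of $G$ with $H$ free metabelian of rank $\max(|A|-|R|,0)$, with $K$ virtually abelian on $|R|$ generators whose normal closure $L=K^G$ is virtually abelian, and with $G=\langle H,K\rangle=LH$; and it furnishes the algorithm that, given the presentation, computes a free basis of $H$ and a $|R|$-element generating set of $K$. Concretely, one runs the algorithm of Proposition~\ref{p: smith_normal_form} to put the presentation into Smith normal form, then reads off $H$ and $K$ from Lemma~\ref{t: structure_Smith_normal_form} as in the proof of Theorem~\ref{t: main_structure_theorem}. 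This is exactly the conclusion required, so the theorem follows. I would also note that the algorithmic clause needs no genericity hypothesis at all: it is valid whenever the input presentation happens to have full rank, and full rank is checkable by computing $\mathrm{rank}(M(A,R))$.

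There is essentially no obstacle here beyond bookkeeping: the only substantive ingredient, the a.a.s. full-rank statement, is quoted as Theorem~\ref{t: random_walks}, and the structural conclusion is quoted as Theorem~\ref{t: main_structure_theorem}. The one place to be slightly careful is the logical packaging of ``asymptotically almost surely'': the event on which the full-rank conclusion holds is precisely $S_P(n,m,\ell)$ for $P$ the property ``admits $H,K$ as described'', and one must observe that this event contains the full-rank event, whose relative size tends to $1$ by Theorem~\ref{t: random_walks}; hence the relative size of $S_P(n,m,\ell)$ also tends to $1$. Thus $P$ is $(n,m)$-generic, which is the assertion of the theorem.
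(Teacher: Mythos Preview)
Your proposal is correct and follows exactly the paper's approach: the paper's proof is the single sentence ``It follows from Theorems~\ref{t: random_walks} and~\ref{t: main_structure_theorem},'' and your argument is simply a careful unpacking of that sentence. The additional remarks about the logical packaging of ``asymptotically almost surely'' and the algorithmic clause are sound but not needed for the comparison.
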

\begin{proof}
It follows from Theorems \ref{t: random_walks}  and \ref{t: main_structure_theorem}.
\end{proof}

\begin{thm}
Let $n, m\in \mbb{N}$, and let $G$ be a finitely generated metabelian group given by a presentation $\langle A\mid R\rangle_{\mc{M}}=\langle a_1, \dots, a_n \mid r_1, \dots, r_m \rangle_{\mc{M}}$, where all relators $r_i$ have length $\ell$. Then  the following hold asymptotically almost surely  as $\ell \to \infty$: 
	\begin{enumerate}
	    \item If $|R| \leq |A|-2$ then the ring of integers $\mbb{Z}$ is interpretable in $G$ by systems of equations, and the Diophantine problem in $G$ is undecidable.
	    \item If $|R| \geq |A|$ then the Diophantine problem of $G$ is decidable (in fact, the first-order theory of $G$ is decidable).
	\end{enumerate}
\end{thm}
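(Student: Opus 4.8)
The plan is to deduce this statement directly from the deterministic version, Theorem~\ref{t: Diophantine_Problem_full_rank}, using that full-rank presentations occur asymptotically almost surely. So there is essentially only one thing to set up: transferring the ``a.a.s.\ full rank'' input into an ``a.a.s.'' conclusion about $\mc D(G)$ and about e-interpretability of $\mbb Z$.

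First I would invoke Theorem~\ref{t: random_walks} (equivalently, the Corollary following it): with $n$ and $m$ fixed and $R$ a set of $m$ words of length $\ell$ chosen uniformly at random in $A^{\pm1}$, the relation matrix $M(A,R)$ has rank $\min\{n,m\}$ asymptotically almost surely as $\ell\to\infty$; that is, the presentation $\langle A\mid R\rangle$ has full rank a.a.s. Here I would note the (trivial but worth-stating) point that having full rank is a property of the matrix $M(A,R)$, hence of the abstract presentation $\langle A\mid R\rangle$, independent of the ambient variety; so the metabelian presentation $\langle A\mid R\rangle_{\mc M}$ is a full-rank presentation in $\mc M$ precisely on this same event.

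Next I would condition on the full-rank event $E_\ell$ and apply Theorem~\ref{t: Diophantine_Problem_full_rank} on $E_\ell$ verbatim: if $|R|\le|A|-2$, then $\mbb Z$ is e-interpretable in $G$ and, by Corollary~\ref{RedCor} together with the undecidability of $\mc D(\mbb Z)$, the Diophantine problem of $G$ is undecidable; if $|R|\ge|A|$, then Theorem~\ref{t: main_structure_theorem} shows $G$ is virtually abelian, so its first-order theory (in particular its Diophantine problem) is decidable (see \cite{Ershov}). Since $\Pr[E_\ell]\to1$ and each of the above conclusions holds pointwise on $E_\ell$, the conjunction of these conclusions holds a.a.s.\ as $\ell\to\infty$, which is exactly the assertion of the theorem.

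I do not expect a genuine obstacle: the mathematical content is already contained in Theorem~\ref{t: random_walks}, Theorem~\ref{t: main_structure_theorem} and Theorem~\ref{t: Diophantine_Problem_full_rank}, and the remaining argument is just the observation that an event forcing a property implies that property holds with the same (asymptotic) probability. The only mild care needed is to make explicit, as above, that ``full rank'' is variety-independent so that Theorem~\ref{t: random_walks}, stated for words and their exponent-sum matrices, applies unchanged to the presentation $\langle A\mid R\rangle_{\mc M}$ appearing here.
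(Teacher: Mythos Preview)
Your proposal is correct and matches the paper's own proof, which simply states that the result follows from Theorems~\ref{t: random_walks} and~\ref{t: Diophantine_Problem_full_rank}. You have merely unpacked a few details (variety-independence of full rank, the references to Corollary~\ref{RedCor}, Theorem~\ref{t: main_structure_theorem}, and \cite{Ershov}) that are already absorbed into those two theorems.
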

\begin{proof}
It follows from  Theorems \ref{t: random_walks}  and \ref{t: Diophantine_Problem_full_rank}.
\end{proof}

\begin{thm}
Let $n, m\in \mbb{N}$, and let $G$ be a finitely generated metabelian group given by a presentation $\langle A\mid R\rangle_{\mc{M}}=\langle a_1, \dots, a_n \mid r_1, \dots, r_m \rangle_{\mc{M}}$, where all relators $r_i$ have length $\ell$. Assume $n\geq m-1$. Then  the following holds asymptotically almost surely  as $\ell \to \infty$:    in any direct decomposition of $G$ all, but one, direct factors are virtually abelian. 
\end{thm}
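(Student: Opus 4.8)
The plan is to derive the statement from the deterministic structural results already established, using Theorem \ref{t: random_walks} to supply full rank asymptotically almost surely. First I would apply Theorem \ref{t: random_walks}: as $\ell \to \infty$, the relation matrix $M(A,R)$ has full rank a.a.s. Since the conclusion is a deterministic consequence of the presentation being full-rank together with the numerical hypothesis $n \geq m-1$, it suffices to prove the deterministic statement that \emph{whenever $\langle A\mid R\rangle_{\mc{M}}$ is a full-rank presentation with $m \leq n+1$, in any direct decomposition of $G$ all but one of the factors are virtually abelian}. The a.a.s.\ conclusion then follows because the event that $M(A,R)$ has full rank has probability tending to $1$ and is contained in the event that this conclusion holds.

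The crucial observation is that the hypothesis $n \geq m-1$, i.e.\ $m \leq n+1$, is weaker than the hypothesis $|R| \leq |A|-1$ of Theorem \ref{t: Full-rank_direct_decomposition}, so I would split the deterministic statement into the three subcases $m \leq n-1$, $m = n$, and $m = n+1$. In the subcase $m \leq n-1$ the statement is exactly Theorem \ref{t: Full-rank_direct_decomposition} applied to the full-rank presentation, and nothing further is needed. In the two subcases $m = n$ and $m = n+1$, that is $|R| \geq |A|$, Theorem \ref{t: main_structure_theorem} gives $G = LH$ with $H$ free metabelian of rank $\max(|A|-|R|,0) = 0$; hence $H$ is trivial and $G = L$ is virtually abelian (exactly the deduction already used in the proof of Theorem \ref{t: Diophantine_Problem_full_rank}).

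In the two subcases $m \in \{n, n+1\}$ the conclusion is then immediate: any direct factor of $G$ is a subgroup of $G$, and every subgroup of a virtually abelian group is virtually abelian, so in any direct decomposition \emph{all} factors — a fortiori all but one — are virtually abelian. Combining the three subcases yields the deterministic statement, which with the reduction of the first paragraph completes the proof. There is no genuinely hard step here, as the result is a synthesis of Theorems \ref{t: random_walks}, \ref{t: Full-rank_direct_decomposition} and \ref{t: main_structure_theorem}; the only point demanding care is to notice that the stated hypothesis $n \geq m-1$ is strictly weaker than that of Theorem \ref{t: Full-rank_direct_decomposition}, so that theorem alone does not cover the boundary cases $m = n$ and $m = n+1$, which must be disposed of separately via the virtual abelianness of full-rank metabelian groups with $|R| \geq |A|$ supplied by Theorem \ref{t: main_structure_theorem}.
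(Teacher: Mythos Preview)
Your argument is correct and follows the same strategy as the paper, which simply writes ``It follows from Theorems \ref{t: random_walks} and \ref{t: Full-rank_direct_decomposition}.'' In fact your version is more careful than the paper's: the stated hypothesis $n\geq m-1$ (i.e.\ $m\leq n+1$) is strictly weaker than the hypothesis $|R|\leq |A|-1$ of Theorem \ref{t: Full-rank_direct_decomposition}, so that theorem alone does not cover the cases $m=n$ and $m=n+1$. You correctly dispose of these boundary cases by invoking Theorem \ref{t: main_structure_theorem} to see that $G$ is then virtually abelian, whence every direct factor is as well. The paper's one-line proof glosses over this (most likely the intended hypothesis was $n\geq m+1$, a typo), so your treatment is the more complete one.
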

\begin{proof}
It follows from  Theorems \ref{t: random_walks}  and \ref{t: Full-rank_direct_decomposition}.
\end{proof} 

\section{Acknowledgements}

This work was supported by the Mathematical Center in Akademgorodok.

Additionally, the first named author was supported by the ERC grant PCG-336983. The first and second named authors were supported by the Basque Government  grant IT974-16, and by the Ministry of Economy, Industry and Competitiveness of the Spanish Government Grant MTM2017-86802-P.

\bibliography{bib}

\end{document}